\newif\iffloattoend
\newif\ifvc
\newcommand{\RR}{\mathbb R}%Reals
\newcommand{\ZZ}{\mathbb Z}%Integers
\newcommand{\QQ}{\mathbb Q}%Rationals
\newcommand{\n}{\mathbf{n}}
\newcommand{\ip}[1]{\left \langle #1 \right \rangle}%Inner product
\newcommand{\sets}[1]{[\![#1]\!]}
\newcommand{\sA}{\mathscr{A}}
\newcommand{\TT}{\mathscr{T}}
\newcommand{\sX}{\mathscr X}%
\newcommand{\sP}{\mathscr{P}}
\newcommand{\sC}{\mathscr{C}}
\DeclareMathOperator{\Tr}{Tr}%trace
\theoremstyle{plain}
\newtheorem{thm}[subsection]{Theorem}
\newtheorem{prop}[subsection]{Proposition}
\newtheorem{conj}[subsection]{Conjecture}
\theoremstyle{definition}
\newtheorem{defn}[subsection]{Definition}
\newtheorem{remark}[subsection]{Remark}
\def\@secnumfont{\bfseries}
\begin{document}

\title[Hypergraph Catalan numbers]{Generalized Catalan
numbers from hypergraphs}

\author{Paul E. Gunnells}
\address{Department of Mathematics and Statistics\\University of
Massachusetts\\Amherst, MA 01003-9305}
\email{gunnells@math.umass.edu}

\renewcommand{\setminus}{\smallsetminus}

\date{2 April 2019} 

\thanks{The author was partially supported by NSF grant DMS 1501832.
We thank Dan Yasaki and two referees for many helpful comments}

\keywords{Matrix models, hypergraphs, Catalan numbers}

\subjclass[2010]{Primary 05A10, 11B65}

\begin{abstract}
The Catalan numbers $(C_{n})_{n\geq 0} =  1,1,2,5,14,42,\dots$ form one of
the most venerable sequences in combinatorics.  They have many
combinatorial interpretations, from counting bracketings of products
in non-associative algebra to counting plane trees and
noncrossing set partitions.  They also arise in the GUE matrix model
as the leading coefficient of certain polynomials, a connection
closely related to the plane trees and noncrossing set partitions
interpretations.  In this paper we define a generalization of the
Catalan numbers.  In fact we define an infinite collection of
generalizations $C_{n}^{(m)}$, $m\geq 1$, with $m=1$ giving 
the usual Catalans. The sequence $C_{n}^{(m)}$ comes from
studying certain matrix models attached to hypergraphs.  We also give
some combinatorial interpretations of these numbers, and conjecture
some asymptotics.
\end{abstract}

\maketitle

\ifvc
\let\thefootnote\relax
\footnotetext{Base revision~\GITAbrHash, \GITAuthorDate,
\GITAuthorName.}
\fi

\section{Introduction}\label{s:intro}

\subsection{}\label{ss:introinterps}
The \emph{Catalan numbers} $(C_{n})_{n \geq 0}$
\[
1, 1, 2, 5, 14, 42, 132, 429, 1430, \dotsc ,
\]
form one of the most venerable sequences in combinatorics.  
They have many combinatorial interpretations, far more than can be
reproduced here.  We only mention a few:
\begin{enumerate}
\item A \emph{plane tree} is a rooted tree with an ordering specified
for the children of each vertex.  Then $C_{n}$ counts the number of
plane trees with $n+1$ vertices. \label{item:planetrees}
\item A \emph{Dyck path} of length $n$ is a directed path from $(0,0)$
to $(n,0)$  in $\RR^{2}$ that only uses steps of type $(1,1)$ and
$(1,-1)$ and never crosses below the $x$-axis.  Then $C_{n}$
counts the number of Dyck paths of length $2n$. \label{item:dyck}
\item A \emph{ballot sequence} of length $2n$ is a sequence
$(a_{1},\dotsc ,a_{2n})$ with $a_{i}\in \{\pm 1 \}$ with total sum $0$
and with all partial sums nonnegative. Then $C_{n}$ counts the number
of ballot sequences of length $2n$. \label{item:ballot}
\item A \emph{binary plane tree} is the empty graph or a plane tree in
which every node $N$ has at most two children, which are called the
left and right children of $N$.  Furthermore, if a node has only one
child, then it must be either a left or right child.
Then $C_{n}$ counts the number of binary plane trees
with $n$ vertices. \label{item:binaryplanetrees}
\item A regular $(n+2)$-gon can be subdivided into triangles without
adding new vertices by drawing $n-1$ new diagonals.  Then $C_{n}$
counts these subdivisions. \label{item:subdivisions}
\item Let $\Pi$ be a polygon with $2n$ sides.  A \emph{pairing} of
$\Pi$ is a partition of the edges of $\Pi$ into blocks of size $2$ and
a choice of relative orientations for the edges in each block.  Any
pairing $\pi$ of $\Pi$ determines a compact topological surface
$\Sigma_{\pi}$ of some genus: one identifies the edges together
according to the pairing.  One can show that the surface $\Sigma_{\pi
}$ is orientable if and only if the edges in each pair are have
opposite orientations as one walks around the boundary of $\Pi$
(cf.~\cite[Ch.~1]{massey}). Then $C_{n}$ counts the number of pairings
of the sides of $\Pi$ such that $\Sigma_{\pi}$ is orientable and has
genus $0$, i.e.~is homeomorphic to the $2$-sphere.
\label{item:gluings}
\end{enumerate}

% \begin{figure}[htb]
% \begin{center}
% \includegraphics[scale=0.2]{bpt}
% \end{center}
% \caption{The two binary trees with two vertices\label{fig:bpt}}
% \end{figure}

The definitive reference for combinatorial interpretations of Catalan
numbers is Richard Stanley's recent monograph \cite{catalan}.  It
contains no fewer than $214$ different interpretations of the
$C_{n}$.(\footnote{An earlier version of this list is contained in
\cite{ec2}, with additions available on Stanley's website
\cite{addendum}.}) Indeed, the first five interpretations given above
appear in \cite[Ch.~2]{catalan} as items (6), % i plane trees
(25), % ii dyck path 
(77), % iii ballot sequences 
(4), % iv binary plane trees 
and 
(1) % v triangulations 
respectively.  The last
interpretation (counting genus 0 polygon gluings) is unfortunately not
in \cite{catalan}.  However, it is easily seen to be equivalent to
\cite[Ch.~2, (59)]{catalan}, which counts the number of ways to draw $n$ nonintersecting
chords joining $2n$ points on the circumference of a circle.  
Another resource is OEIS \cite{oeis}, where the
Catalans are sequence \texttt{A000108}.

\subsection{}\label{ss:catexamples} The goal of this paper is to give a family of
generalizations of the $C_{n}$.  For each integer $m\geq 1$, we define
a sequence of integers $(C_{n}^{(m)})_{n\geq 0}$; for $m=1$ we have
$C_{n}^{(1)} = C_{n}$.  Here are some
further examples:
\begin{align*}
m = 2:&\quad 1, 1, 6, 57, 678, 9270, 139968, 2285073, 39871926, 739129374, 14521778820,\dotsc  \\
m = 3:&\quad 1, 1, 20, 860, 57200, 5344800, 682612800, 118180104000, 27396820448000, \dotsc  \\
m = 4:&\quad 1, 1, 70, 15225, 7043750, 6327749750, 10411817136000, 29034031694460625,\dotsc  \\
m = 5:&\quad 1, 1, 252, 299880, 1112865264, 11126161436292, 255654847841227632, \dotsc  \\
m = 6:&\quad 1, 1, 924, 6358044, 203356067376, 23345633108619360, \dotsc  \\
m = 7:&\quad 1, 1, 3432, 141858288, 40309820014464, 53321581727982247680, \\
      &\quad \quad \quad 238681094467043912358445056,\dotsc 
\end{align*}

The $C_{n}^{(m)}$ are defined in terms of counting walks on trees,
weighted by the orders of their automorphism groups.  For $m=1$ the
resulting expression is not usually given as a standard combinatorial
interpretation of the Catalan numbers, but it is known to compute
them; we will prove it in the course of proving Theorem
\ref{thm:interpretations}.  In fact, from our definition it is not
clear that the $C_{n}^{(m)}$ are actually \emph{integers}, even for
$m=1$, although we will see this by giving several combinatorial
interpretations of them.

Here is the plan of the paper.  In \S\ref{s:defofgencat} we give the
definition of the $C_{n}^{(m)}$, and in \S\ref{s:computing} we explain
how to compute them for moderate values of $n$ and any $m$.  In \S
\ref{s:ci} we give six different combinatorial interpretations of the
$C_{n}^{(m)}$ based on six standard interpretations of the Catalan
numbers.  In \S \ref{s:gf} we explain how to compute the generating
function of the $C_{n}^{(m)}$, and conjecture some asymptotics
of $C_{n}^{(m)}$ for fixed $m$ as $n\rightarrow \infty$.  Finally, in \S
\ref{s:mm} we discuss how these numbers arise in the study of
certain matrix models associated to hypergraphs.

\section{Hypergraph Catalan Numbers}\label{s:defofgencat}

\subsection{} We begin by giving the description of the Catalan
numbers that we wish to generalize.  Let $\TT_{n}$ be the set of
unlabeled trees on $n$ vertices.  The sequence $|\TT_{n}|$
appears on {OEIS} as sequence \texttt{A000055}, and begins
\[
1, 1, 1, 1, 2, 3, 6, 11, 23, 47, 106, \dotsc ,
\]
where $|\TT_{0}| := 1$ by convention.  

Let $T\in \TT_{n}$, and for each vertex $v\in T$, let $a_{T}(v)$ be
the number of walks that begin and end at $v$ and traverse each edge
of $T$ exactly twice.  Note that any such walk visits each other
vertex at least once, and may do so multiple times.  Let $\Gamma (T)$
be the automorphism group of $T$, and let $|\Gamma (T)|$ be its order.

Figure \ref{fig:trees} shows an example of the numbers $a_{T } (v)$
for the 3 trees in $\TT_{5}$.  The outer numbers on the
leaves of the upper left tree are $1$ because the only possible walk
is to go from one end of the tree to the other, then back to the
beginning.  The inner numbers on the same tree are $2$, because
one must first choose a direction in which to head, then must go all
the way to that end, then back through the starting point to the other
end, then back to the initial vertex.

\begin{prop}\label{prop:Cnone}
The Catalan number $C_{n}$ is given by 
\begin{equation}\label{eq:catdef}
C_{n} = \sum_{T\in \TT_{n+1}} \sum_{v\in T} \frac{a_{T} (v)}{|\Gamma (T)|}.
\end{equation}
\end{prop}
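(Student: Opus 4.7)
The plan is to interpret the right-hand side of~\eqref{eq:catdef} as counting unlabeled rooted plane trees on $n+1$ vertices, which by interpretation~(\ref{item:planetrees}) equals $C_n$.

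First I would establish a bijection between the walks counted by $a_T(v)$ and rooted plane tree structures on $T$ with root $v$. Fix a labeled representative of $T$. Given an ordering of children at each vertex (a rooted plane structure at $v$), the associated depth-first traversal starting at $v$ is a closed walk at $v$ that traverses each edge exactly twice; conversely, any such walk determines, at each vertex $u$, a linear ordering of the edges from $u$ to its children, namely the order in which the walk first departs $u$ along them. A direct enumeration confirms the count: both sides equal $d(v)!\prod_{u\ne v}(d(u)-1)!$, since one chooses an ordering of all $d(v)$ edges at the root $v$ and an ordering of the $d(u)-1$ child edges at each non-root vertex $u$.

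Next I would apply an orbit-counting argument. Let $T_0$ be a labeled representative of $T\in\TT_{n+1}$, and let $S$ be the set of pairs consisting of a root $v\in V(T_0)$ together with a rooted plane structure on $T_0$ rooted at $v$. The automorphism group $\Gamma(T)$ acts on $S$, and this action is \emph{free}: any $\gamma\in\Gamma(T)$ fixing an element $(v,\sigma)\in S$ must fix the root $v$, and then an induction down the tree (using that the ordering of children at each vertex is preserved by $\gamma$) forces every vertex to be fixed. Hence the number of unlabeled rooted plane trees with underlying unlabeled tree $T$ equals $|S|/|\Gamma(T)|=\sum_{v\in T}a_T(v)/|\Gamma(T)|$, and summing over $T\in\TT_{n+1}$ produces the right-hand side of~\eqref{eq:catdef}. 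The only substantive step is the DFS bijection above; the remainder is bookkeeping with a free group action, so no serious obstacle is anticipated.
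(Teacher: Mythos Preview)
Your proposal is correct and is essentially the paper's own argument specialized to $m=1$. The paper proves the proposition as the $m=1$ case of Theorem~\ref{thm:interpretations}, by constructing a bijection between equivalence classes of pairs $(T,w)$ (tree plus $a_T^{(m)}$-tour, modulo $\Gamma(T)$) and admissibly $m$-labeled plane trees on $nm+1$ vertices; for $m=1$ the labeling condition is vacuous and the bijection is precisely your DFS/preorder correspondence between walks and plane structures, with the free $\Gamma(T)$-action implicit in the paper's passage to equivalence classes.
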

For example, using Figure \ref{fig:trees} we have 
\[
C_{4} = \frac{8}{2}
 + \frac{48}{24} + \frac{16}{2} = 4+2+8 = 14.
\]

We defer the verification of Proposition \ref{prop:Cnone} until
\S\ref{s:ci}, when we discuss combinatorial interpretations.  The
proposition will be proved in the course of Theorem
\ref{thm:interpretations}.

\begin{figure}[htb]
\psfrag{1}{$\scriptstyle 1$}
\psfrag{2}{$\scriptstyle 2$}
\psfrag{4}{$\scriptstyle 4$}
\psfrag{6}{$\scriptstyle 6$}
\psfrag{24}{$\scriptstyle 24$}
\begin{center}
\includegraphics[scale=0.25]{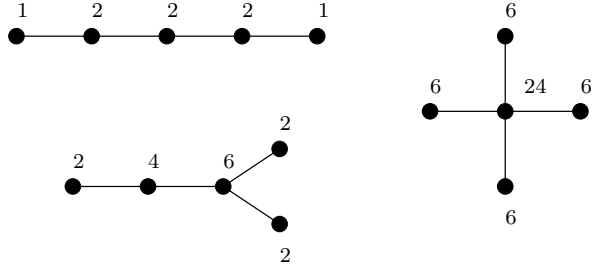}
\end{center}
\caption{The set $\TT_{5}$ with vertices labeled by $a_{T}
(v)$\label{fig:trees}.  Going clockwise from the upper left, the
orders of the automorphism groups are $2, 24, 2$.}
\end{figure}

\subsection{} Now let $m\geq 1$ be a positive integer.  Then
$C^{(m)}_{n}$ is defined essentially as in \eqref{eq:catdef}, but we
modify the definition of the numbers $a_{T} (v)$:

\begin{defn}\label{def:amTtour}
Let $m\geq 1$, let $T\in
\TT_{n+1}$, and let $v\in T$.  Then an \emph{$a^{(m)}_{T}$-tour}
beginning at $v$ is a walk that begins and ends at $v$ and traverses
each edge of $T$ exactly $2m$ times.  We denote by $a^{(m)}_{T} (v)$
the number of $a^{(m)}_{T}$-tours beginning at $v$.  
\end{defn}

We note that $a^{(1)}_{T} (v) = a_{T} (v)$.  As before, in an
$a^{(m)}_{T}$-tour each vertex of $T$ will be visited at least once.
Furthermore, since $T$ is a tree, each edge is visited $m$ times while
going away from $v$ and $m$ times while coming back to $v$.

\begin{defn}\label{def:gencat}
The \emph{hypergraph Catalan numbers} $C_{n}^{(m)}$ are defined by
\begin{equation}\label{eq:catdef2}
C^{(m)}_{n} = \sum_{T\in \TT_{n+1}} \sum_{v\in T} \frac{a_{T}^{(m)} (v)}{|\Gamma (T)|}.
\end{equation}
\end{defn}

For example the numbers $a_{T}^{(2)} (v)$ are shown in Figure
\ref{fig:trees2} for the three trees in $\TT_{5}$.  The numbers are
larger now, since walks have many more options.
Using the numbers in Figure \ref{fig:trees2} we find
\[
C_{4}^{(2)} = \frac{216}{2} + \frac{5040}{24} + \frac{720}{2} = 108+210+360 = 678.
\]

\begin{figure}[htb]
\psfrag{27}{$\scriptstyle 27$}
\psfrag{54}{$\scriptstyle 54$}
\psfrag{630}{$\scriptstyle 630$}
\psfrag{180}{$\scriptstyle 180$}
\psfrag{270}{$\scriptstyle 270$}
\psfrag{90}{$\scriptstyle 90$}
\psfrag{2520}{$\scriptstyle 2520$}
\begin{center}
\includegraphics[scale=0.25]{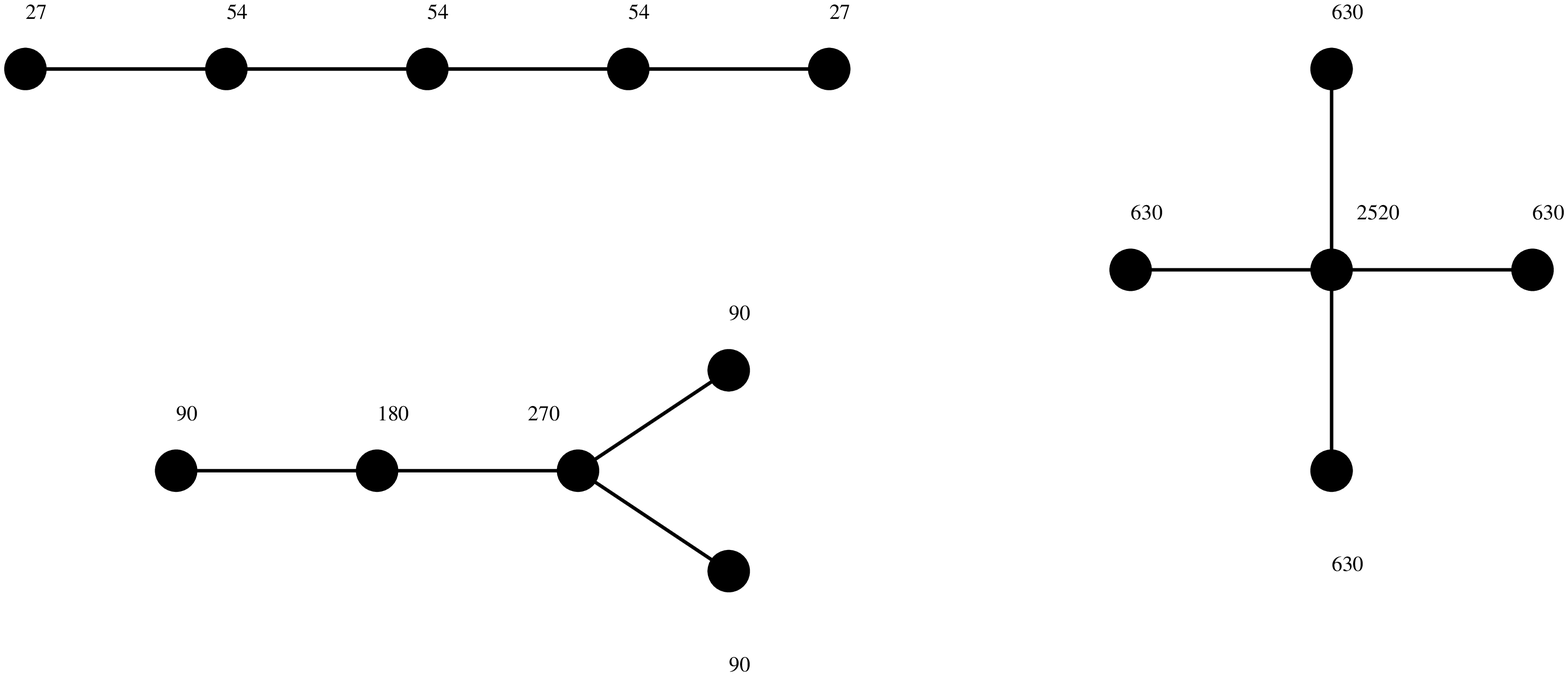}
\end{center}
\caption{The set $\TT_{5}$ with vertices labeled by $a_{T}^{(2)}
(v)$.\label{fig:trees2}}
\end{figure}

\section{Computing the $C_{n}^{(m)}$}\label{s:computing}

\subsection{}
In this section we show that once one has sufficient knowledge of the
trees in $\TT_{n+1}$ to compute $C_{n}$, one can easily compute
$C^{(m)}_{n}$ for any $m$.  In other words, if one wants to extend the
data in \S\ref{ss:catexamples}, we show that it is
easy to fix $n$ and let $m$ grow.

\begin{thm}\label{thm:computingcat}
Let $T\in \TT_{n+1}$, and let $v \in T$ have degree $d (v)$.
Then we have
\begin{equation}\label{eq:contrib}
\sum_{v\in T} {a_{T}^{(m)} (v)}=
\frac{2nm^{n+1}}{(m!)^{2n}} \prod_{v\in T}(md (v) -1)!.
\end{equation}
\end{thm}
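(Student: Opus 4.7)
The plan is to apply the BEST theorem for counting directed Eulerian circuits. Given $T$, form a directed multigraph $D$ by replacing each undirected edge $\{u,w\}$ with $m$ arcs from $u$ to $w$ and $m$ arcs from $w$ to $u$, so that $\deg^+(u)=\deg^-(u)=md(u)$ for every vertex $u$ and $D$ is Eulerian. Each $a^{(m)}_T$-tour at $v$ lifts to an ordered Eulerian walk in $D$ from $v$ to $v$, and this lift is $(m!)^2$-to-$1$ per edge of $T$, since the $m$ forward arcs and the $m$ backward arcs across each edge of $T$ can be permuted without changing the underlying walk on $T$. A parity/flow argument (trivial for a tree, since each edge separates $T$ into two components) guarantees that any closed walk using each edge $2m$ times uses it $m$ times in each direction, so the lift is well-defined. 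Hence
$$
(m!)^{2n}\, a_T^{(m)}(v)\;=\;\#\{\text{ordered Eulerian walks in } D \text{ starting at } v\}.
$$

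By the BEST theorem the right-hand side equals $\deg^+(v)\cdot \tau_v(D)\cdot \prod_{w}(\deg^+(w)-1)!$, where $\tau_v(D)$ is the number of spanning arborescences of $D$ rooted at $v$ (all arcs directed toward $v$). Since the underlying undirected graph of $D$ is the tree $T$, any such arborescence has underlying graph $T$ with the unique orientation toward $v$; for each of the $n$ non-root vertices $w$ there are $m$ choices for which parallel arc from $w$ to its parent to use, giving $\tau_v(D)=m^n$. Substituting $\deg^+(w)=md(w)$,
$$
a_T^{(m)}(v)\;=\;\frac{md(v)\cdot m^n \cdot \prod_{w}(md(w)-1)!}{(m!)^{2n}}\;=\;\frac{d(v)\,m^{n+1}}{(m!)^{2n}}\prod_{w\in T}(md(w)-1)!.
$$
Summing over $v\in T$ and using the handshake identity $\sum_{v}d(v)=2n$ then yields \eqref{eq:contrib}.

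The main subtlety is the dictionary between walks on $T$ and labeled Eulerian arc-walks on $D$: one must justify the $(m!)^{2n}$ symmetry factor and the parity claim that each edge is traversed equally often in both directions. Once this is in place, the BEST theorem plus the easy arborescence count $\tau_v(D)=m^n$ (which uses the tree structure of $T$ crucially) give the formula directly, with all the dependence on $v$ concentrated in the single factor $d(v)$, which is exactly what makes the sum over $v$ collapse to $2n$.
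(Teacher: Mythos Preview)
Your proof is correct and follows essentially the same route as the paper: build the balanced digraph $D=T_m$ with $m$ arcs in each direction over every edge of $T$, apply the BEST theorem with the arborescence count $\tau_v(D)=m^n$, and divide by the $(m!)^{2n}$ overcount coming from permuting parallel arcs. The only organizational difference is that the paper multiplies $\varepsilon(T_m,e)$ by the total number of arcs $2mn$ to get the global count of Eulerian tours and then passes to $a_T^{(m)}$-tours, whereas you keep track of the starting vertex throughout, obtain the sharper per-vertex identity $a_T^{(m)}(v)=d(v)\,m^{n+1}(m!)^{-2n}\prod_w(md(w)-1)!$, and then sum using $\sum_v d(v)=2n$; the two computations are equivalent.
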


For example, for the tree on the right of Figure \ref{fig:trees2}, we
find 
\[
\frac{2\cdot 4 \cdot 2^{5}}{(2!)^{8}} (2\cdot 1-1)!^{4} (2\cdot 4-1)!
=  5040,
\]
which agrees with the data in the figure.

\begin{proof}
We use the results in \cite[Ch.~10]{stanley.alg.comb}, which treat
Eulerian tours in balanced digraphs, and so we begin by recalling some
notation.  Let $G$ be a connected digraph and let $\tilde{G}$ be the
associated undirected graph. Suppose $G$ is \emph{balanced}; this
means that the outdegree $o (v)$ of each vertex $v$ is equal to its
indegree $i (v)$.  Given a vertex $v$ of $G$, an oriented spanning
tree with root at $v$ is a subgraph $T\subset G$ such that $\tilde{T}$
is a spanning tree of $\tilde{G}$ in the usual sense, and such that
all the edges of $T$ are oriented towards $v$.  Then since $G$ is
connected and balanced it is Eulerian \cite[Theorem
10.1]{stanley.alg.comb}, and according to
\cite[Theorem 10.2]{stanley.alg.comb} given an edge $e$ the number
$\varepsilon (G,e)$ of Eulerian tours of $G$ beginning with the
(directed) edge $e$ is
\begin{equation}\label{eq:epsGe}
\varepsilon (G,e) = \tau (G, e) \prod_{v\in G} (o (v)-1)!,
\end{equation}
where $\tau (G,e)$ is the number of oriented spanning trees of $G$
with root at the initial vertex of $e$.  Furthermore, it is known that
$\tau (G,e)$ is independent of $e$ \cite[Corollary
10.3]{stanley.alg.comb}.

Now let $T$ be a tree with $n+1$ vertices and fix $m$.  The left of
\eqref{eq:contrib} is the total number of $a^{(m)}_{T}$-tours on $T$
(Definition \ref{def:amTtour}).  We will count $a^{(m)}_{T}$-tours by
first counting Eulerian tours on the canonical balanced digraph
$T_{m}$ built from $T$ by replacing each edge with $2m$ edges, $m$
oriented in one direction and $m$ oriented in the other.  Let $v\in
T$.  Then it is clear that an $a_{T}^{(m)}$-tour contributing to $a^{(m)}_{T} (v)$
determines (non-uniquely) an Eulerian tour on $T_{m}$ starting and
ending at $v$.  Hence we can use \eqref{eq:epsGe} to compute
$a_{T}^{(m)} (v)$.  In particular for an edge $e\in T_{m}$ we have
\begin{equation}\label{eq:epsTme}
\varepsilon(T_{m}, e) = \tau (T_{m}, e) \prod_{v\in T} (md (v)-1)!,
\end{equation}
where $d (v)$ is the degree of $v$ in $T$.

Now we go from \eqref{eq:epsTme} to \eqref{eq:contrib}.  First, the
number $\tau (T_{m}, e)$ of oriented spanning trees in $T_{m}$ is
$m^{n}$ (after fixing a root we pick one of $m$ possible properly
oriented edges for each edge in $T$).  Next, to get the total number
of Eulerian tours of $T_{m}$, we multiply $\varepsilon(T_{m}, e)$ by
the number of edges of $T_{m}$, which is $2mn$.  The result is
\begin{equation}\label{eq:tmtours}
\Bigl|\Bigl\{\text{Eulerian tours of
$T_{m}$}\Bigr\}\Bigr| = 2mn\cdot m^{n} \cdot \prod_{v\in T} (md (v)-1)!.
\end{equation}
Now let $\pi$ be the map
\[
\pi \colon \Bigl\{\text{Eulerian tours of
$T_{m}$}\Bigr\}\longrightarrow \Bigl\{ \text{$a_{T}^{(m)}$-tours of $T$} \Bigr\}
\]
that replaces each edge of an Eulerian tour of $T_{m}$ with the
corresponding edge in $T$.  It is clear that $\pi$ is surjective.
Furthermore, each $a^{(m)}_{T}$-tour $w$ of $T$ has precisely
$(m!)^{2n}$ preimages under this map, since $w$ traverses each edge of
$T$ in each direction precisely $m$ times, and we get to choose in
which order these $m$ traversals correspond to the $m$ corresponding
edges of $T_{m}$.  Thus
\begin{equation}\label{eq:tmandam}
\Bigl|\Bigl\{\text{Eulerian tours of
$T_{m}$}\Bigr\}\Bigr| = (m!)^{2n}\cdot \Bigl| \Bigl\{ \text{$a_{T}^{(m)}$-tours of $T$} \Bigr\}\Bigr|.
\end{equation}
Comparing \eqref{eq:tmtours} and \eqref{eq:tmandam}
yields \eqref{eq:contrib}, and completes the proof.
\end{proof}

\subsection{} Thus to compute $C_{n}^{(m)}$ for any $m$ one only needs
the trees in $\TT_{n+1}$ together with their vertex degrees and orders
of their automorphism groups.  This can be done, at least for
reasonable values of $n$, using the software \texttt{nauty}
\cite{nauty}.  For example there are $751065460 \approx 2^{29}$ trees
in $\TT_{27}$; \texttt{nauty} is able to compute them on a laptop in
less than 14 seconds.  On the other hand, computing the orders of all
the automorphism groups takes longer.  For instance there
are only $823065 \approx 2^{20}$ trees in $\TT_{20}$, and computing
all their automorphism groups takes just over 4 hours.

\section{Combinatorial interpretations}\label{s:ci}

\subsection{} As it turns out, the numbers $C_{n}^{(m)}$ have a
variety of combinatorial interpretations, in fact in terms of objects
used to count the usual Catalan number $C_{nm}$.  As we shall see,
only certain of these will contribute to $C_{n}^{(m)}$, and in general
a given object may contribute in several different ways.  It will also
be evident that any standard Catalan interpretation can be turned into
one for the $C_{n}^{(m)}$.  We begin by introducing some notation.

\subsection{} Let $X$ be an combinatorial object; we do not give a
precise definition of $X$, but the reader should imagine that $X$ is
something used in a standard Catalan interpretation, such as those in
\S \ref{ss:introinterps}.  We will give examples in Theorem
\ref{thm:interpretations}.  Typically $X$ will be an aggregate of
smaller elements, and we say that a \emph{level structure} for $X$ is
a surjective map $\ell$ from these elements to a finite set
$\sets{N} := \{1,2,\dotsc ,N \}$ for some $N\in \ZZ_{>0}$.  We will say $x\in X$
is on a \emph{higher level} than $x'\in X$ if $\ell (x) > \ell (x')$,
with similar conventions for \emph{same} and \emph{lower} level.  The
$i$th level $X_{i}\subset X$ with respect to $\ell$ will be the
preimage $\ell^{-1} (i) \subset X$.  In some interpretations, we will
also have a zeroth level $X_{0}$; these will usually be combinatorial
objects that are naturally rooted.  If $X$ has a zeroth level, we will
require $|X_{0}| = 1$. Note that in the pictures that follow, the
function $\ell$ will not typically correspond to the height of
elements of $X$ in their positions in the figures.

\subsection{} The object $X$ will be a poset.  If $x,x'\in X$ and $x$
covers $x'$, we will say that $x$ is a \emph{parent} of the
\emph{child} $x'$.  The level structures we consider will always be
compatible with the poset structure, in that any child $x'$ of a given
parent $x$ will satisfy $\ell (x') = \ell (x)+1$.  In other words,
children will always lie one level above their parents.  We remark the
poset structure determines the level of each element as long as a
level-$0$ element is present.

\subsection{} Finally, let $m$ be a positive integer.  We will
consider \emph{$m$-labeling} the positive levels of $X$, which means
the following.  First fix an infinite set $L$ of labels.  Let
\[
X = X_{0} \sqcup \bigsqcup_{i\geq 1} X_{i}
\]
be the disjoint decomposition of $X$ into levels, where $X_{0}$ may be
empty.  For each \emph{positive} level $X_{i}$, $i>0$, we choose a
disjoint decomposition of $X_{i}$ into subsets of order $m$; in
particular, this implies $|X_i| \equiv 0 \bmod m$ for $i > 0$ in all
cases we consider.  Then an $m$-labeling is a map assigning an
element of $L$ to each of these subsets.  We will say that an
$m$-labeling is \emph{admissible} if the following are true:
\begin{itemize}
\item Distinct subsets receive distinct labels.
\item The labeling is compatible with the poset structure, in the
following sense: if two elements $x,x'$ share a given label, then the labels
of their parents agree.
\end{itemize}
In other words, an admissible $m$-labeling is a partition of the set
$X\smallsetminus X_0$ of all non-level-0 elements of $X$ into disjoint
size $m$ blocks such that if two elements $u, v \in X \smallsetminus
X_0$ have children in the same block, then $u$ must belong to the same
block as $v$.  We also consider two labelings to be equivalent if one
is obtained from the other by permuting labels.  Note that all
$m$-labelings are admissible if $m=1$.

\subsection{} We give an example to clarify this terminology. Let
$X$ be a plane tree.  The elements of $X$ are its
vertices.  Let $v$ be the root.  We can define a level structure
$\ell \colon X \rightarrow \ZZ_{\geq 0}$ by setting $\ell (x)$ to be
the distance in $X$ to $v$.  Note that $X_{0} = \{v \}$ has
size $1$, but of course the positive levels $X_{i}$ can be bigger.
Given two vertices $x,x'$, one is a parent or a child of the other if
it is in the usual sense of trees: $x$ is a parent (respectively,
child) of $x'$ if $x$ and $x'$ are joined by an edge and $x$ lies
closer to (respectively, further from) the root than $x'$.  Figure
\ref{fig:trees-unlabeled} shows two plane trees $X, Y$.
Each tree has four levels.  Parents appear above their
children, and levels increase as we move down the figure.

Next we consider labelings.  Figure \ref{fig:2-labelings} shows the
two rooted trees $X, Y$ equipped with $2$-labelings, with labeling
set $L=\{a,b,c,d,e \}$.  We have arbitrarily ordered the labels as
indicated; one would obtain an equivalent labeling after permuting the
labels.  The left tree $X$ is admissibly $2$-labeled: if two vertices have
the same label, so do their parents.  The right tree $Y$, however, is
not admissibly $2$-labeled: the two vertices at the bottom have the same
label $e$, but their parents have two different labels $c, d$.

\subsection{} Now let $\sX_{nm}$ be a set of objects constituting a
combinatorial interpretation of $C_{nm}$ (we will say which we
consider in a moment).  Then we will define a poset structure and
levels on each $X \in \sX_{nm}$, and show
\begin{equation}\label{eq:wlabels}
C_{n}^{(m)} = \sum_{X\in \sX_{nm}} N_{m}(X),
\end{equation}
where $N_{m} (X)$ is the number of admissible $m$-labelings of $X$.
As mentioned before, an object $X \in \sX_{nm}$ cannot have $N_{m}
(X)\not =0$ unless its level structure satisfies an obvious congruence
condition: the size of a nonzero level must be divisible by $m$.  This
condition is not sufficient, however, as one can see in Figure
\ref{fig:2-labelings}.  Both trees have positive levels of even size,
but there is no way to give the right tree an admissible $2$-labeling:
the parents of the vertices labeled $e$ are forced to have different
labels.  On the other hand, if $m=1$ then any $m$-labeling is
admissible.  Hence when $m=1$ each $X\in \sX_{nm}$ contributes to
$C_{nm}$ with $N_{m} (X)=1$, and one recovers a usual combinatorial
interpretation of the Catalan numbers.

\begin{figure}[htb]
\psfrag{X0}{$X_{0}$}
\psfrag{X1}{$X_{1}$}
\psfrag{X2}{$X_{2}$}
\psfrag{X3}{$X_{3}$}
\psfrag{Y0}{$Y_{0}$}
\psfrag{Y1}{$Y_{1}$}
\psfrag{Y2}{$Y_{2}$}
\psfrag{Y3}{$Y_{3}$}
\begin{center}
\includegraphics[scale=0.25]{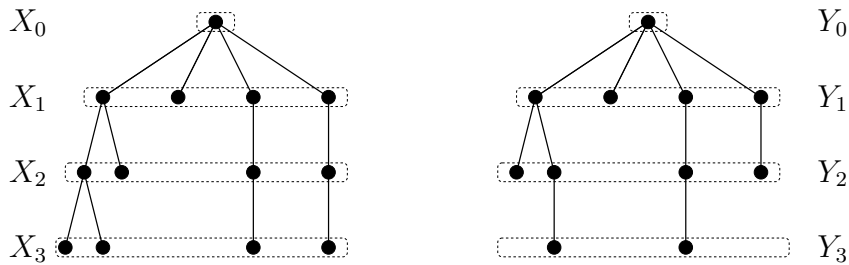}
\end{center}
\caption{Two rooted trees $X, Y$ with their levels.\label{fig:trees-unlabeled}}
\end{figure}

\begin{figure}[htb]
\psfrag{a}{$a$}
\psfrag{b}{$b$}
\psfrag{c}{$c$}
\psfrag{d}{$d$}
\psfrag{e}{$e$}
\psfrag{f}{$f$}
\begin{center}
\includegraphics[scale=0.25]{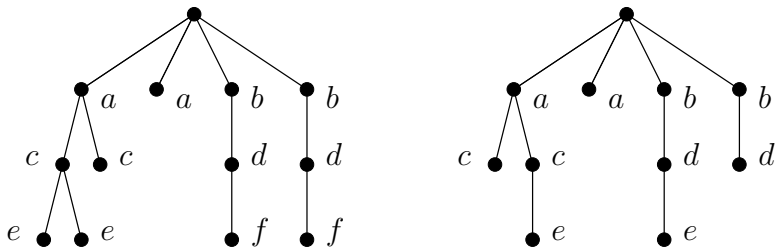}
\end{center}
\caption{$2$-labelings of $X$ and $Y$.  The labeling of $Y$ is not admissible.\label{fig:2-labelings}}
\end{figure}

\subsection{}\label{ss:interp}
We are now ready to give our combinatorial interpretations.  For each
we explain their level and hierarchical structures.  Examples of all
the objects are shown in Figures \ref{fig:c22a}--\ref{fig:c22b}.

\subsection*{(i) Plane trees.}  The set $\sX_{nm}$ is the
set of plane trees on $nm+1$ vertices.  The levels are the distance to
the root, and the vertices are parents/children of each other if they
are in the usual sense.

\subsection*{(ii) Dyck paths.} The set $\sX_{nm}$ is the set of Dyck
paths from $(0,0)$ to $(2mn, 0)$.  The elements of a path $\pi $ are its
\emph{slabs}, defined as follows.  Let $R_{\pi }$ be the interior of the
region bounded by $\pi $ and the $x$-axis.  Then a slab is a connected
component of the intersection of $R_{\pi }$ with an open strip $Y_{k} =
\{(x,y)\in \RR^{2} \mid k-1<y<k \}$, where $k\geq 1$ is an integer.  A
slab is in level $k$ if it lies in $Y_{k}$; the zeroth level is empty.
A slab $S$ is a parent of a slab $S'$ if $S$ sits in a lower level and
sits under $S'$.

\subsection*{(iii) Ballot sequences.}  The set $\sX_{nm}$ consists of
the ballot sequences $B = (a_{1}, \dotsc , a_{2nm})$.  Let $s_{k} =
\sum_{i=1}^{k} a_{i}$ be the $k$th partial sum.  Let $i<j$.  We say
$i, j$ are a \emph{pair} if (i) $a_{i} = 1$, $a_{j}=-1$; (ii)
$s_{i}=s_{j}+1$; and (iii) $j$ is the minimal index greater than $i$
for which these conditions are true.  Then the elements of a ballot
sequence are its pairs.  The level of a pair $(i,j)$ is the value of
$s_{i}$; the zeroth level is empty.  A pair $p=(i,j)$ is the parent of
$q=(k,l)$ if the level of $p$ is one less than that of $q$ and $i<k$
and $l>j$.

\subsection*{(iv) Binary plane trees.}  The set $\sX_{nm}$ consists of
all binary plane trees with $nm$ vertices.  The zeroth level is empty.
The $i$th level for $i\geq 1$ consists of all vertices that can be
reached from the root by a path that always moves away from the root
with exactly $i-1$ left steps.  A vertex $v\in X_{i}$ is a parent
of $v'\in X_{i+1}$ if there is a path from $v$ to $v'$ with exactly
one left step.

\subsection*{(v) Triangulations.}  Let $\Pi = \Pi_{nm+2}$ be a regular
polygon with $nm+2$ sides.  Then $\sX_{mn}$ consists of triangulations
$\Delta$ of $\Pi$ that do not have new vertices.  The elements of
$\Delta$ are its triangles, and the levels of $\Delta$ are given by
the sets of \emph{left-turning triangles}, which means the following.
Fix once and for all an edge $e$ of $\Pi$ and let $T$ be the triangle
of $\Delta$ meeting $e$.  As one enters $T$ across $e$ there is a
unique exiting edge $e_{R}$ to the right and one $e_{L}$ to the left.
We say the triangle $T_{R}$ meeting $T$ at $e_{R}$, if there is one,
is obtained by turning right, and the triangle $T_{L}$ across $e_{L}$,
if there is one, is obtained by turning left.  Then the first level
$\Delta_{1}$ of $\Delta$ consists of $T$ and all the triangles that
can be reached from $T$ by turning left.  The second level
$\Delta_{2}$ consists of the triangles that can be reached by turning
right once from a triangle on the first level, and then turning left
an arbitrary number of times.  Continuing this process, each triangle
in $\Delta$ gets placed into a unique positive level.  The zeroth
level is empty.  A triangle $T\in \Delta_{i}$ is the parent of $T'\in
\Delta_{i+1}$ if $T'$ can be reached from $T$ by a single right turn
followed by any number of left turns.

\begin{figure}[htb]
\psfrag{e}{$e$}
\psfrag{tl}{$T_{L}$}
\psfrag{tr}{$T_{R}$}
\begin{center}
\includegraphics[scale=0.25]{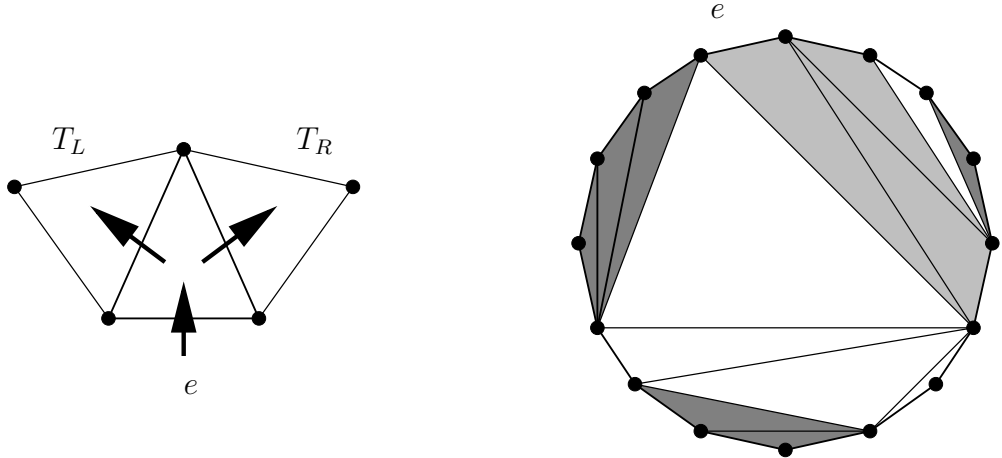}
\end{center}
\caption{The left and right triangles determined by an edge $e$, and a
triangulation of a polygon with its level structure.  The light grey
triangles, which are obtained by turning left after entering across
the edge $e$, are on level 1. The white triangles are on level 2, and
the dark grey triangles are on level 3.\label{fig:triangles}}
\end{figure}

\subsection*{(vi) Polygon gluings.}  Let $\Pi = \Pi_{2mn}$ be a
regular polygon with $2mn$ sides with a distinguished vertex.  An
oriented gluing of $\Pi$ is a partition of its sides into $n$ blocks
of size $2m$, with the sides oriented such that as one moves clockwise
around $\Pi $, the orientations in a given block \emph{alternate}
(cf.~the right column of Figure \ref{fig:c22b}).  We also assume that
the first edge in any of these blocks is oriented such that clockwise
is positive.  The labeling of the edges induces an equivalence
relation on the vertices of $\Pi$, after one performs the
identifications.  Then $\sX_{nm}$ is the set of such gluings with the
number of equivalence classes of vertices \emph{maximal}.  A vertex
$b$ is a child of $a$ if the edge joining them is positive from $a$ to
$b$.  The distinguished vertex is at level $0$, and the levels of the
others are determined by requiring that passing from parent to child
increases the level.

\subsection{} Figures \ref{fig:c22a}--\ref{fig:c22b} illustrate the
combinatorial interpretations used in the computation of $C^{(2)}_{2}
= 6$.  For this number each object that affords an admissible
$2$-labeling has at most two levels; we show the first level using
light grey and the second level using white.  Labelings are indicated
by the letters $a,b$.  Note that only one object has more than one
$2$-labeling, namely the one appearing in the first three lines of
each figure.

\begin{figure}[htb]
\psfrag{a}{$\scriptstyle a$}
\psfrag{b}{$\scriptstyle b$}
\psfrag{c}{$\scriptstyle c$}
\psfrag{d}{$\scriptstyle d$}
\psfrag{e}{$\scriptstyle e$}
\psfrag{f}{$\scriptstyle f$}
% \psfrag{p1}{$\scriptstyle q$}
% \psfrag{m1}{$\scriptstyle -q$}
% \psfrag{p2}{$\scriptstyle q^{2}$}
% \psfrag{m2}{$\scriptstyle -q^{2}$}
\psfrag{p1}{$\scriptstyle 1$}
\psfrag{m1}{$\scriptstyle -1$}
\psfrag{p2}{$\scriptstyle 1$}
\psfrag{m2}{$\scriptstyle -1$}
\includegraphics[scale=0.15]{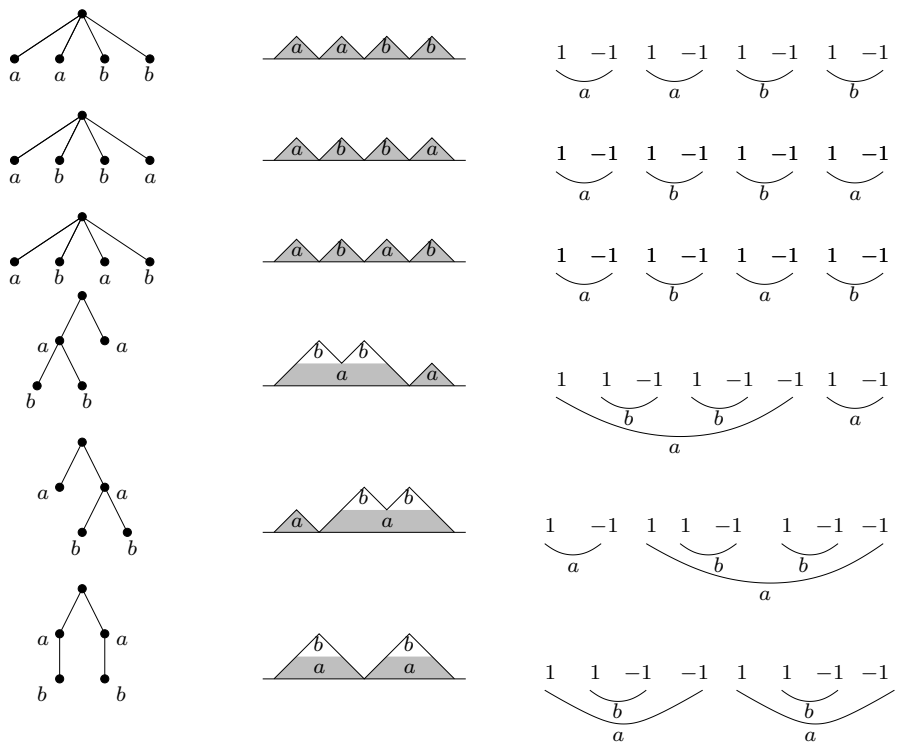}
\caption{Plane trees, Dyck paths, ballot sequences.\label{fig:c22a}}
\end{figure}

\begin{figure}[htb]
\psfrag{a}{$\scriptstyle a$}
\psfrag{b}{$\scriptstyle b$}
\psfrag{c}{$\scriptstyle c$}
\psfrag{d}{$\scriptstyle d$}
\psfrag{e}{$\scriptstyle e$}
\psfrag{f}{$\scriptstyle f$}
\psfrag{st}{$\scriptstyle *$}
\includegraphics[scale=0.15]{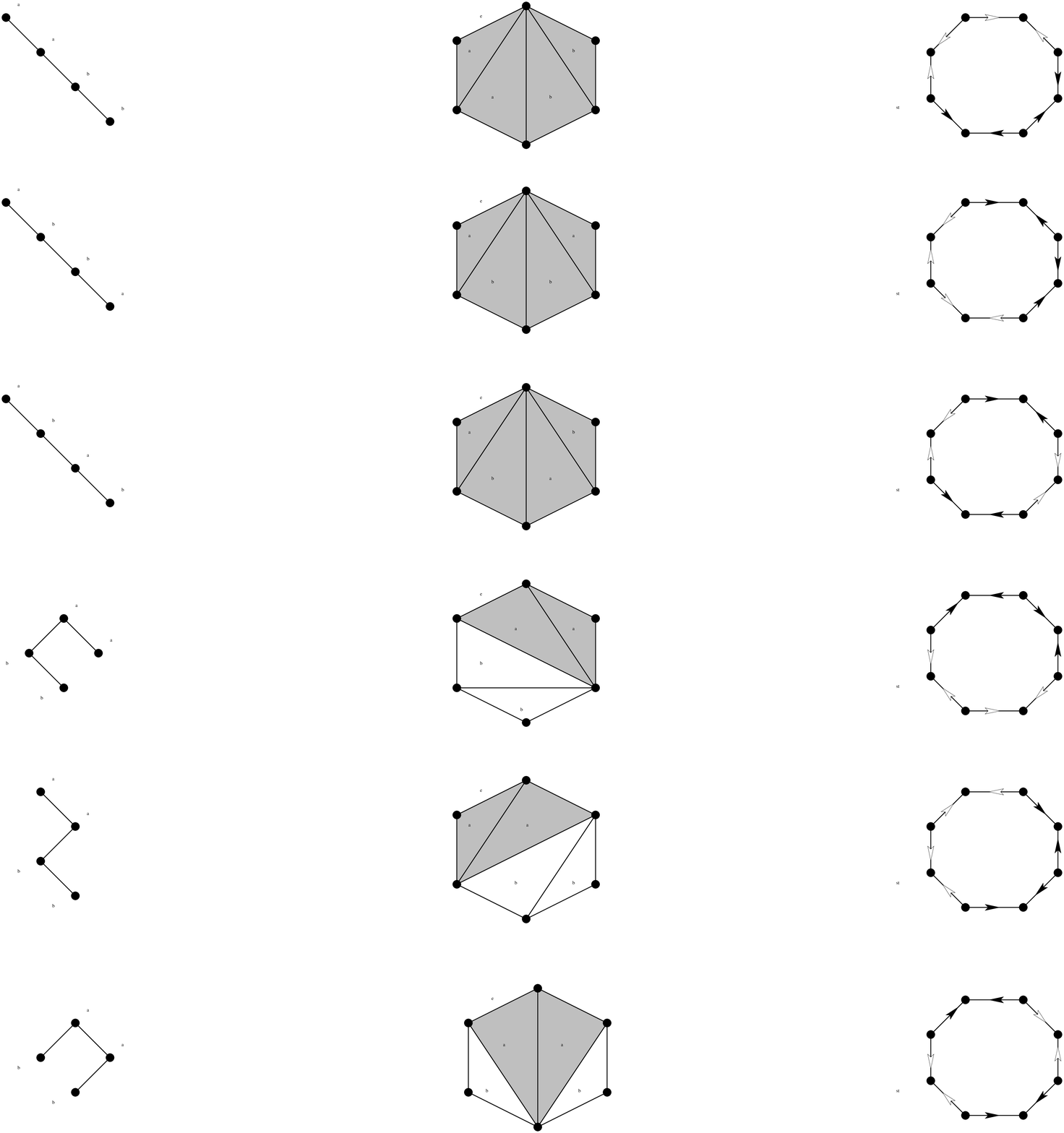}
\caption{Binary trees, triangulations, polygon gluings. \label{fig:c22b}}
\end{figure}

\begin{thm}\label{thm:interpretations}
The interpretations (i)--(vi) given in \S \ref{ss:interp} count the numbers
$C^{(m)}_{n}$, i.e.~\eqref{eq:wlabels} holds.
\end{thm}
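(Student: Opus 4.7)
The unifying strategy is to prove interpretation (i) (plane trees) directly from the definition \eqref{eq:catdef2}, and then to deduce (ii)--(vi) by showing that standard Catalan bijections lift to bijections preserving the level and parent--child structures, and hence preserving the notion of admissible $m$-labelings. The main substantive construction happens in the plane tree case; everything else is bookkeeping, with the notable exception of (vi).

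For (i), I would construct a bijection between the triples summed in \eqref{eq:catdef2} modulo $\Gamma(T)$ and admissibly $m$-labeled plane trees on $nm+1$ vertices modulo permutation of labels. Given $(T,v,w)$ with $w$ an $a^{(m)}_T$-tour based at $v$, I unfold $w$ into a plane tree $P$ rooted at $v$: each time $w$ traverses an edge of $T$ outward from the current vertex, I record it as the next (leftmost unused) edge descending from the current vertex of $P$, creating a new child. Since $w$ traverses each edge of $T$ exactly $m$ times in each direction, every non-root vertex $u \in T$ produces exactly $m$ new vertices in $P$; these form a block in the labeling $\ell$, labeled by $u$. Admissibility is automatic: two $P$-vertices sharing a label arose at the same $T$-vertex, so their $P$-parents arose at the parent (with respect to $v$) of that $T$-vertex and hence share a label. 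The $\Gamma(T)$-action on triples corresponds exactly to permuting labels of $\ell$, so the construction descends to a bijection whose inverse contracts each $m$-block of $P$. Summing over orbits therefore gives $\sum_P N_m(P)$, proving (i); specializing to $m=1$, where admissibility is automatic and $N_1(P)=1$ for all plane trees $P$, yields Proposition \ref{prop:Cnone}.

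For (ii)--(v), I would exhibit in each case a bijection $\sX_{nm} \to \{\text{plane trees on } nm+1 \text{ vertices}\}$ that carries the stated level/parent structure onto depth/child structure. For (ii), the classical correspondence sending a Dyck path to its contour plane tree sends each slab in the horizontal strip $Y_k$ to a non-root vertex of depth $k$, with slab nesting matching the parent--child relation. For (iii), the substitution $+1 \leftrightarrow \text{up}$ and $-1 \leftrightarrow \text{down}$ identifies ballot sequences with Dyck paths, and the pair $(i,j)$ at height $s_i$ corresponds exactly to the slab (hence vertex) at depth $s_i$. For (iv), the rotation correspondence sending a plane tree to a binary tree via \emph{first-child = left, next-sibling = right} converts the $X_i$ in (iv) into the set of depth-$i$ plane-tree vertices. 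For (v), the standard bijection between triangulations of $\Pi_{nm+2}$ and binary trees with $nm$ nodes (one node per triangle, connecting triangles sharing an interior edge, with the base triangle meeting $e$ as root) sends turning left to moving to the left child and turning right to moving to the right child, so (v) reduces to (iv). In each case, preservation of level and parent--child structure means preservation of admissibility, so (i) implies the desired count.

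The case (vi) is where I expect real obstacles. A gluing of $\Pi_{2mn}$ into $n$ blocks of size $2m$ yields a CW-complex with one face, $n$ edges, and $V$ vertex classes, so $V - n + 1 = 2 - 2g$ when the result is an orientable closed surface. I would first verify that the alternating-orientation condition generalizes the $m=1$ orientability criterion recalled in interpretation \eqref{item:gluings} of the introduction, so that orientability is automatic; then maximality of $V$ forces $g=0$, yielding $V = n+1$. The plan is to biject such genus-$0$ alternating gluings with admissibly $m$-labeled plane trees on $nm+1$ vertices: read the sides of $\Pi$ clockwise from the distinguished vertex as the edges of a contour walk of a plane tree $P$, and group the sides by their block to obtain the $m$-labeling. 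The non-crossing behavior of blocks, forced by $g=0$ together with the alternation condition, mirrors the nested structure of $P$-edges sharing a label, and the first-edge-positive convention pins the distinguished vertex to the root. The delicate step, and the main obstacle, is verifying that the level structure defined in (vi) via positively-oriented edges matches the depth structure on $P$, and that the maximality of $V$ is exactly equivalent to $m$-admissibility of the induced labeling. I would handle this by induction on $n$: in the genus-$0$ case there is always an outermost block of $\Pi$ (corresponding to an outermost leaf-block of $P$) which can be removed to reduce $n$ by one, compatibly with both structures.
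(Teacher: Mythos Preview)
Your proposal is correct and follows essentially the same architecture as the paper's proof. For (i) your unfolding/contracting construction is precisely the paper's pair of maps $\alpha$ (walk $\mapsto$ labeled plane tree via the Dyck word of the tour, labels inherited from $T$) and $\beta$ (labeled plane tree $\mapsto$ quotient tree plus the walk read off preorder traversal); the paper also checks, as you should, that admissibility forces the contracted graph $\bar S$ to be a tree. For (ii)--(v) both you and the paper transfer via the standard Catalan bijections, verifying that levels and parent--child relations are preserved; the paper uses the de~Bruijn--Morselt bijection for plane $\leftrightarrow$ binary trees where you invoke the equivalent rotation correspondence.

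The one place your route diverges slightly is (vi). The paper folds (vi) into the same framework by invoking the plane-tree $\leftrightarrow$ sphere-gluing bijection and asserting that ``the same bijections prove that all interpretations give the same counts'' for $m>1$, without spelling out how the $2m$-block structure matches admissible $m$-labelings. You instead propose to biject the max-vertex alternating gluings \emph{directly} with admissibly $m$-labeled plane trees via the contour walk, and to verify by induction on $n$ that genus $0$ is equivalent to admissibility of the induced labeling. This is really the same bijection seen from the other side, but your treatment is more explicit about the genuine content: that the alternating-orientation and first-edge-positive conventions make the boundary word a Dyck path, and that maximality of the vertex count is what forces admissibility. Either route works; yours isolates more clearly what has to be checked.
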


\begin{proof}
First we claim that all the interpretations (i)--(vi) give the same
counts.  To see this, note that if $\sX_{nm}$ is any of the above sets
(with no labelings), then $|\sX_{nm}| = C_{nm}$, and there are known
bijections between the different objects (cf.~\cite[Theorem
1.5.1]{catalan}).  For the convenience of the reader, we recall these
bijections.  To simplify notation we set $m=1$.

\medskip\noindent \textbf{Plane trees and Dyck paths.}  Let $T$ be a plane
tree with $n+1$ vertices with root vertex $v$.  Then $T$ has $n$
edges, and one can build a length $2n$ Dyck path $\pi (T)$ as follows.
Recall that a plane tree either consists of the single vertex $v$, or
else $v$ has a sequence of subtrees $T_{1}, \dotsc , T_{k}$, each of
which is a plane tree.  To construct the Dyck path attached to $T$,
one performs the \emph{preorder tree traversal}: this is the traversal
that begins by visiting the root vertex, then recursively visits the
subtrees of the root $T_{1}, \dotsc , T_{k}$, in order.  The traversal
returns to the root along the root-incident edge every time it is done
traversing a subtree.

During the traversal one crosses each edge of $T$ exactly twice, once
going down, and once going up.  For every move down one appends the
step $(1,1)$ to $\pi (T)$, and for every move up one appends $(1,-1)$.
It is clear that the result is a Dyck path; it has $2n$ steps, never
goes below the $x$-axis, and ends at $(2n,0)$.  One can also easily
reverse this process: given a Dyck path $\pi$, one builds a plane tree
$T (\pi)$ whose preorder traversal is encoded by $\pi$.

\medskip \noindent \textbf{Dyck paths and ballot sequences.}  Let
$\pi$ be a Dyck path of length $2n$.  We create a sequence $b (\pi)=
(a_{1},\dotsc ,a_{2n})$ with $a_{i}\in \{\pm 1 \}$ by projecting onto
the second coordinate: $(1,1)\mapsto 1$ and $(1,-1)\mapsto -1$.  Thus
$b (\pi)$ records the change in height above the $x$-axis as one moves
along $\pi$.  Then $b (\pi)$ is clearly a ballot sequence: the
condition that $\pi$ never goes below the $x$-axis ensures that the
partial sums are nonnegative.  It is also clear that this process can
be reversed to give a map from ballot sequences to Dyck paths.

\medskip
\noindent \textbf{Plane trees and polygon gluings.}  Let $T$ be a
plane tree.  We can regard $T$ as embedded in the sphere $S^{2}$,
for example by applying stereographic projection to the plane.  If one
cuts the sphere open along the edges of $T$, one obtains a polygon
$\Pi_{2n}$ with $2n$ sides together with data of an oriented gluing: the
edges come naturally in pairs, and the root becomes the distinguished
vertex of $\Pi_{2n}$.  To go backwards, if one starts with a polygon
$\Pi_{2n}$ and identifies the edges in pairs, one obtains a
topological surface $S$ together with an embedded connected graph $G$ ($G$ may
have loops or multiple edges).  The surface $S$ is orientable if and
only if the gluing data is oriented (as one goes around the boundary
of the polygon, every pair of edges to be glued must appear with
opposite orientations).  If $S$ is orientable, then the graph $G$ is a
tree if and only if $S$ is a sphere; this happens if and only if the
number of vertices of $G$ is maximal after gluing;
this follows from considering the Euler characteristic of $S$,
computed as $1-|\text{edges}|+|\text{vertices}|$.\footnote{As
mentioned before (\S\ref{ss:introinterps}), there
is a connection between polygon gluings and interpretation (59) in
\cite[Chapter 2]{catalan}.  This interpretation is that $C_{n}$ is the
number of ways to draw $n$ nonintersecting chords joining $2n$ points
on the circumference of a circle.  If one starts with a connected
oriented polygon gluing, and draws chords between the centers of the edge
pairs, one obtains $n$ chords as in (59).  The condition that these
chords do not intersect is exactly equivalent to the resulting surface
being a sphere.}

\medskip \noindent \textbf{Triangulations and binary plane trees.}
Let $\Delta$ be a triangulation of the polygon $\Pi_{n+2}$ with a
distinguished edge $e$.  One can make a binary plane tree $B (\Delta)$
by taking the dual of $\Delta $ as follows (cf.~Figure
\ref{fig:tri2bin}).  The vertices of $B (\Delta)$ are the triangles of
$\Delta$.  Two vertices are joined by an edge if and only if they
correspond to adjacent triangles in $\Delta$.  The distinguished edge
$e$ sits in the boundary of one triangle, which determines the root of
$B (\Delta)$ (in Figure~\ref{fig:tri2bin} the root is indicated by a
doubled circle).  The resulting tree is embedded in the plane and is a
binary tree exactly because each triangle has three sides.  It is also
easy to see that this construction is reversible.

\medskip \noindent \textbf{Plane trees and binary plane trees.}
Finally we come to this bijection, due to de~Bruijn--Morselt
\cite{dbm}, which is the most interesting of all.  We follow the
presentation in the proof of \cite[Theorem 1.5.1]{catalan}
closely.\footnote{In particular we follow the paragraph labeled
(iii)$\rightarrow$(ii) at the bottom of p.~8.}

Let $T$ be a plane tree on $n+1$ vertices.  We will construct a binary
tree $B (T)$ on $n$ vertices; in fact the vertices of $B (T)$ are the
non-root vertices of $T$.  First we delete the root from
$T$ and all edges incident to it.  Then we remove all edges that are
not the leftmost edge from any vertex.  In other words, if the
children of $v$ in $T$ are $v_{1},\dotsc ,v_{k}$, then we remove the
edges $\{v, v_{2}\})$, \dots , $\{v, v_{k} \}$ and leave the edge
$\{v, v_{1} \}$ untouched.  After this the remaining edges become the
\emph{left} edges in the binary tree $B (T)$. 

To construct the \emph{right} edges in $B (T)$, we create edges
horizontally across the vertices of $T$.  In particular, given a
vertex $v$ in $T$, we draw edges from each child $w$ of $v$ to the
child of $v$ immediately to the right of $w$, if this child exists.
Finally the root of $B (T)$ is the leftmost child of the root of $T$.
(See Figure \ref{fig:plane2bin} for an example.)  This process is
easily seen to be reversible.

\begin{figure}[htb]
\psfrag{e}{$e$}
\begin{center}
\includegraphics[scale=0.25]{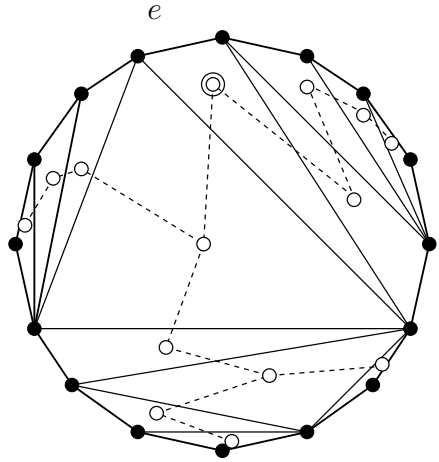}
\end{center}
\caption{Making a binary tree $B (\Delta)$ from a triangulation
$\Delta$.  The vertices of $B (\Delta)$ are white and its edges are
dashed.  The root vertex is circled.\label{fig:tri2bin}}
\end{figure}

\begin{figure}[htb]
\psfrag{arr}{$\longrightarrow $}
\begin{center}
\includegraphics[scale=0.25]{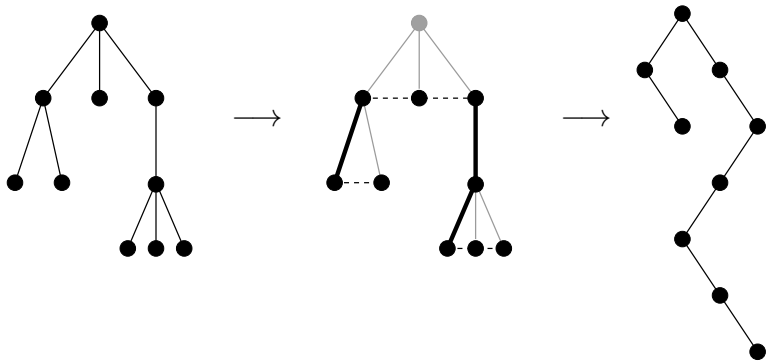}
\end{center}
\caption{Making a binary tree $B (T)$ from a plane tree $T$.  We first
delete the root of $T$ and all edges incident to it.  Then we delete
the edges in $T$ that are not furthest to the left.  The deleted edges
are shown in gray, and the remaining edges are drawn with a wider pen
width.  We then create horizontal edges, shown as dashed lines, by
drawing horizontally from the leftmost child of a vertex $v$ through
the remaining children of $v$, in order.  The original (respectively,
horizontal) edges become the left (resp., right) edges of $B
(T)$.\label{fig:plane2bin}}
\end{figure}

\medskip We now return to the original discussion. We claim that for
$m>1$, the same bijections prove that all interpretations give the
same counts.  One needs only to check that the definitions of
admissible labelings are compatible.  For instance, to pass from a
plane tree $T$ to a Dyck path $\pi $, one starts at the root of the
$T$ and traverses it in preorder (process the node, traverse the left
subtree, then traverse the right subtree).  As one descends $T$ along
an edge, one steps up in $\pi$ by $(1,1)$, and as one ascends along an
edge, one steps down by $(1,-1)$.  With this correspondence, it is
evident that the non-root vertices of $T$ coincide with the slabs of
$\pi$.  Indeed, the slabs of $\pi$ are constructed so that the
corresponding vertices of $T$ are at the same distance from the root,
and the poset structure in the slabs mirrors that of the vertices of
$T$.  It is also easy to see that the different notions of admissible
labelings agree.  The verifications for the other interpretations are
similar.

To complete the proof, we must check that any one of them actually
computes $C_{n}^{(m)}$.  We will use admissibly labeled plane trees
(i). 

Let $\sC = \sC_{n+1}$ be the set of pairs 
\[
\Bigl\{(T,w) \Bigm | \text{$T \in \TT_{n+1}$, $w$ an $a_{T}^{(m)}$-tour}\Bigr\}
\]
modulo the equivalence relation $(T,w) \sim (T',w')$ if $T=T'$ and
there is an automorphism of $T$ taking $w$ to $w'$.  Then $|\sC | =
C^{(m)}_{n}$.  Let $\sP = \sP_{n}$ be the set of plane trees on $n$
vertices, and let $\sA = \sA_{nm+1}$ be the set of pairs
\begin{equation}\label{eq:defofA}
\Bigl\{(T,l) \Bigm |  \text{
$T\in \sP_{nm+1}$ and $l$ is an admissible
$m$-labeling of $T$}\Bigr\}.
\end{equation}
Then $|\sA |$ is the right hand side of \eqref{eq:wlabels}.  We will
prove that \eqref{eq:wlabels} holds by constructing a bijection
between $\sC$ and $\sA$. 

We begin by defining two maps 
\[
\alpha \colon \sC \longrightarrow \sA , \quad \beta \colon \sA \longrightarrow \sC.
\]
First we define $\alpha$.  Given $(T,w) \in \sC$, let $v$ be the
vertex where $w$ begins and ends.  Then $\alpha (T,w)$ is the plane
tree $\tilde{T}$ whose Dyck word $\pi (\tilde{T})$ is determined by
the following rule: the $i$th step of $\pi (\tilde{T})$ is an up-step
if the $i$th step of $w$ moves towards $v$, and is a down-step
otherwise.  We equip $\tilde{T}$ with a labeling $l$ as follows: the
$i$th vertex in preorder traversal of $\tilde{T}$ is given the label
of the starting point of the $i$th edge of $w$.  This labeling $l$ is
an admissible $m$-labeling: if $x_{1},x_{2}$ are two vertices of
$\tilde{T}$ with the same labels, then the parents $y_{1},y_{2}$ of
$x_{1}, x_{2}$ receive the label of the unique vertex $y\in T$ that is
one step closer to $v$.  Thus $\alpha (T,w)\in \sA$.  We also claim
$\alpha$ is well-defined.  Suppose $\alpha (T,w) = (\tilde{T},l)$ and
$(T,w)\sim (T',w')$.  Then $\alpha (T',w')$ is the pair $(\tilde{T},
l')$, where the labels of $l'$ are a permutation of those of $l$.
Figure \ref{fig:alpha} shows an example of the computation of
$\alpha$ for $m=2$.

Now we define $\beta $.  Let $(S,l)\in \sA$.  Let $\bar S$ be the
graph obtained from $S$ by first identifying any two vertices of $S$
that share the same label, and then by replacing parallel edges in the
resulting multigraph by single edges.  We note that $\bar S$ is
actually a tree.  Indeed, the admissibility of the labeling implies
that the map $E (S) \rightarrow E (\bar S)$ on edges is $m : 1$, and
the map on vertices is $m:1$ away from the root and $1 : 1$ on the
root.  Thus $\bar S$ has $n+1$ vertices and $n$ edges, and since it is
clearly connected $\bar S$ is a tree.  Finally, we define a walk $w = w
(\bar S)$ on $\bar S$ by writing the sequence of labels encountered
during the preorder traversal of $S$.  The $m$-admissibility of the
labeling implies that $w$ is an $a^{(m)}_{S}$-tour, and thus
we have defined an element $\beta (S,l) = (\bar S, w)$.  Figure
\ref{fig:beta} shows an example of this construction.

To complete the proof, we must show that $\alpha$, $\beta$ are
bijections.  First we show $\beta \circ \alpha = 1_{\sC}$.  We claim
that if $(\bar S, w (\bar S)) = (\beta \circ \alpha) (T,w)$, then
$\bar S = T$ and $w (\bar S) = w$ (in other words, the representative
of an equivalence class in $\sC$ is taken to itself).  This is clearly
true if $T$ has one vertex.  Suppose the claim is true for all pairs
$(T,w)$ with $\leq n$ vertices.  Let $T$ be a tree with $n+1$
vertices, let $w$ be an $a_{T}^{(m)}$-tour of $T$ beginning at $v$,
let $x$ be a leaf of $T$, and let $y$ be the neighbor of $x$ in $T$.
Deleting $x$ and modifying $w$ appropriately we obtain a pair $(T',
w')$.  In $(S', l')= \alpha (T',w')$ we have $m$ vertices
$y_{1},\dotsc ,y_{m}$ corresponding to $y$.  Then $(S,l)= \alpha
(T,w)$ is obtained from $(S',l')$ by placing $m$ new vertices
$x_{1},\dotsc ,x_{m}$ under the $y_{i}$ according to where they appear
in the walk $w$.  By induction $\beta (S',l') = (T',w')$.  When we
construct $\beta (S,l)$ the only differences are that now we collapse
the $m$ edges of the form $(x_{i},y_{j})$ to a single edge in $\bar S$
joining $x$ to $y$, and that we build $w (\bar S)$ from $w (\bar S')$
by incorporating the vertex $x$.  Clearly $\bar S = T$.  Moreover $w
(\bar S) = w$, since the edges in $S$ were built to make preorder
traversal in $S$ match the original walk $w$.  This shows $\beta \circ
\alpha = 1_{\sC}$.

Now we show $\alpha \circ \beta = 1_{\sA}$.  The proof is similar.
Clearly this is true for all $(S,l)\in \sA$ with one vertex.  Suppose
that for all $(S,l)$ with $<nm+1$ vertices we have $(\alpha \circ
\beta) (S,l) = (S,l)$.  Let $(S,l)$ have $nm+1$ vertices and choose a
leaf $x_{1}$ of $S$ of highest level.  The admissibility of the
labeling $l$ implies that there are $m-1$ additional vertices
$x_{2},\dotsc ,x_{m}$ with the same label as $x_{1}$, and since
$x_{1}$ lies in the highest level, the vertices $x_{2},\dotsc ,x_{m}$
must also be leaves of $S$.  Let $y_{1},\dotsc ,y_{m}$ be the vertices
of $S$ with label equal to any parent of $x_{i}$.  In particular, the
parents of the $x_{i}$ are a subset of the $y_{j}$, but not every
$y_{j}$ is necessarily a parent of an $x_{i}$.  Deleting the $x_{i}$
we obtain a labeled plane tree $(S',l')$.  If $(\bar S', w (\bar S'))
= \alpha (S',l')$, then by induction we have $\beta (\bar S', w (\bar
S')) = (S',l')$, and moreover the tree $\bar S'$ is obtained from
$\bar S$ by deleting $x$ (the image of the $x_{i}$).  The walk $w
(\bar S)$ is obtained from $w (\bar S')$ by inserting steps along the
edge $(x,y)$ exactly according to the up- and down-steps in $S$ along
the edges of the form $(x_{i},y_{j})$, where $y$ is the image of the
$y_{j}$.  After applying $\alpha$ we exactly recover the edges
$(x_{i},y_{j})$ in $S$.  This shows $\alpha \circ \beta = 1_{\sA}$,
and completes the proof of the theorem.
\end{proof}

\begin{remark}
The interpretations in Theorem \ref{thm:interpretations} make it
possible to define various higher analogues of other standard numbers,
such as Narayana numbers, and higher $q$-analogues.  We have not
pursued these definitions. 
\end{remark}

\begin{figure}[htb]
\begin{center}
\psfrag{a}{$\scriptstyle a$}
\psfrag{b}{$\scriptstyle b$}
\psfrag{c}{$\scriptstyle c$}
\psfrag{d}{$\scriptstyle d$}
\psfrag{v}{$\scriptstyle v$}
\psfrag{T}{$T$}
\psfrag{T'}{$\tilde{T}$}
\psfrag{ar}{$\longrightarrow$}
\includegraphics[scale=0.25]{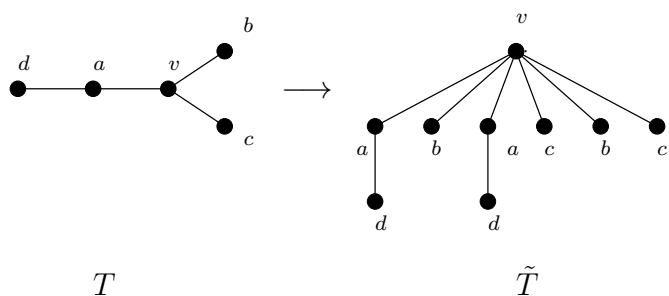}
\end{center}
\caption{Applying the map $\alpha$ to a pair $(T,w)$ gives an
admissibly $m$-labeled plane tree $\tilde{T}$.  The label set is
$\{a,b,c,d \}$, and the walk is $w = vadavbvadavcvbvcv$. If we apply the
automorphism of $T$ that swaps vertices $b$ and $c$, we obtain an
equivalent admissibly $m$-labeled plane tree. \label{fig:alpha}}
\end{figure}

\begin{figure}[htb]
\begin{center}
\psfrag{a}{$\scriptstyle a$}
\psfrag{b}{$\scriptstyle b$}
\psfrag{c}{$\scriptstyle c$}
\psfrag{d}{$\scriptstyle d$}
\psfrag{v}{$\scriptstyle v$}
\psfrag{T}{$S$}
\psfrag{T'}{$\bar{S}$}
\psfrag{ar}{$\longrightarrow$}
\includegraphics[scale=0.25]{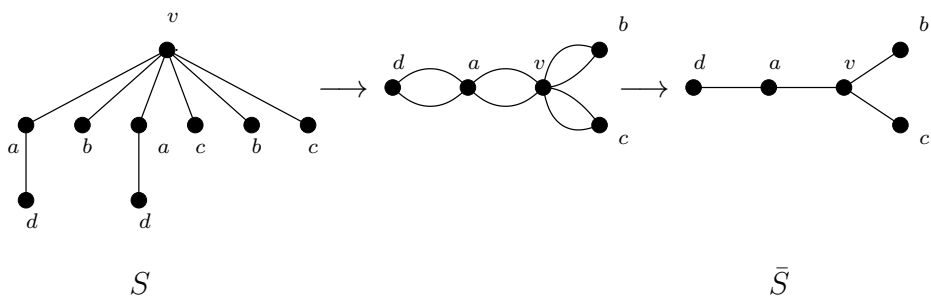}
\end{center}
\caption{The map $\beta$ is the composition of these two arrows;
applied to a pair $(S,l)$ gives a tree $\bar {S}$ and a walk $w$.  The
resulting walk is the same as that in Figure
\ref{fig:alpha}. \label{fig:beta}}
\end{figure}

\section{Generating functions and asymptotics}\label{s:gf}

\subsection{}
Let 
\begin{equation}\label{eq:Fm0}
F_{m} (x) = \sum_{n\geq 0} C^{(m)}_{n}x^{n}
\end{equation}
be the ordinary generating function of the $C^{(m)}_{n}$.  In this
section we explain how to use the combinatorial interpretations in
\S\ref{s:ci} to compute \eqref{eq:Fm0} to arbitrary precision as a
power series in $x$.  Then we will give a conjecture of the asymptotic
behavior of $C^{(m)}_{n}$ that generalizes the famous formula 
\[
C_{n} \sim \frac{4^{n}}{n^{3/2}\sqrt{\pi}}, \quad (n\rightarrow \infty).
\]

\subsection{}
We will compute $F_{m} (x)$ by showing that counting admissibly
labeled plane trees on $nm+1$ vertices is equivalent to counting
colored plane trees on $n+1$ vertices.  To count the latter, we use
standard generating function techniques as described in
Flajolet--Sedgewick \cite{fs}.  For the benefit of the reader and to
keep our presentation as self-contained as possible, we recall the
results we need here.

\begin{prop}\label{prop:coloringtrees}
\leavevmode
\begin{enumerate}
\item Let $A (x) = \sum_{k\geq 0} a_{k}x^{k} \in \ZZ \sets{x}$ be an
integral formal power series with $a_{k}\geq 0$.  Let $\sP_{A}$ be the
set of all colored plane trees such that any vertex with $k$ children
can be painted one of $a_{k}$ colors.  Let $P_{A}\in \ZZ \sets{x}$ be
the ordinary generating function of $\sP_{A}$, so that the coefficient
$[x^{n}]P_{A} (x)$ of $x^{n}$ counts the trees in $\sP_{A}$ with $n$
vertices.  Then $P_{A}$ satisfies the functional relation
\begin{equation}\label{eq:Aeqn}
P_{A} = x A (P_{A}).
\end{equation}
\item Let $B (x) = \sum_{k\geq 0} b_{k}x^{k} \in \ZZ \sets{x}$ be
another integral formal power series with $b_{k}\geq 0$.  Let
$\sP_{A,B}$ be the set of all colored plane trees such that (a) any
non-root vertex with $k$ children can be painted one of $a_{k}$
colors, and (b) if the root vertex has degree $k$ then it can be
painted any one of $b_{k}$ colors.  Let $P_{A,B}\in \ZZ \sets{x}$ be
the ordinary generating function of $\sP_{A,B}$.  Then $P_{A,B}$
satisfies the functional relation
\begin{equation}\label{eq:Beqn}
P_{A,B} = x B (P_{A}),
\end{equation}
where $P_{A}$ satisfies \eqref{eq:Aeqn}.
\end{enumerate}
\end{prop}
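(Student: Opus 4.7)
The plan is to translate the combinatorial specifications of $\sP_A$ and $\sP_{A,B}$ into functional equations via the symbolic method for ordinary generating functions, as developed in Flajolet--Sedgewick. The key observation is the canonical recursive description of a plane tree $T$: it consists of a distinguished root together with an ordered (possibly empty) sequence of plane subtrees, each rooted at a child of the root of $T$.

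For part (i), I would describe a tree $T \in \sP_A$ whose root has degree $k$ as being specified by (a) a choice of one of $a_k$ colors for the root, and (b) an ordered $k$-tuple $(T_1,\dots,T_k)$ of colored plane trees in $\sP_A$, namely the subtrees rooted at the children of the root. In generating function terms, the root contributes the marker $x$ (tracking total vertex count), the ordered $k$-tuple of subtrees contributes $P_A^k$, and summing over $k\geq 0$ with weighting $a_k$ gives
\begin{equation*}
P_A \;=\; x \sum_{k\geq 0} a_k P_A^k \;=\; x A(P_A),
\end{equation*}
which is \eqref{eq:Aeqn}. The recursion is well-defined as a formal power series: since each subtree has strictly fewer vertices than $T$, the coefficient $[x^n]P_A$ is determined by iterating the map $Q \mapsto xA(Q)$ starting from $Q=0$, and each iterate gains at least one order of accuracy in $x$.

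For part (ii), the argument is a one-line modification. A tree $T \in \sP_{A,B}$ with root of degree $k$ is specified just as above, but the root receives one of $b_k$ colors instead of $a_k$. The subtrees rooted at the children of the root have the property that every one of their vertices (including each subtree's own root) is a \emph{non-root} vertex of the ambient tree $T$; hence each such subtree is an element of $\sP_A$, contributing $P_A$ to the generating function rather than $P_{A,B}$. Summing over $k \geq 0$ with weighting $b_k$ gives
\begin{equation*}
P_{A,B} \;=\; x \sum_{k\geq 0} b_k P_A^k \;=\; x B(P_A),
\end{equation*}
which is \eqref{eq:Beqn}.

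Neither step presents a real obstacle; the proposition is essentially a direct translation of a standard combinatorial specification into generating function language. The only mildly subtle point is the bookkeeping distinction between the colorings of the root and the non-root vertices in (ii), which is exactly why the subtrees of the root must be regarded as elements of $\sP_A$ rather than $\sP_{A,B}$.
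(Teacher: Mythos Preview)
Your proof is correct and follows essentially the same approach as the paper: both use the recursive decomposition of a plane tree into a root together with an ordered sequence of subtrees, translate this via the symbolic method (citing Flajolet--Sedgewick), and observe that in part (ii) the subtrees of the root lie in $\sP_A$ because their vertices are non-root in the ambient tree. Your added remark on well-definedness of the functional equation as a formal power series is a small bonus not spelled out in the paper.
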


\begin{proof}
These results are proved in \cite{fs}.  Specifically (i) is
\cite[Proposition I.5]{fs} and (ii) follows from \cite[Example
III.8]{fs}.  We sketch the proof here.

For \eqref{eq:Aeqn}, recall that a plane tree is a rooted tree with an
ordering specified for the children of each vertex.  This is
equivalent to the recursive specification that a plane tree is a root
vertex $v$ with a (possibly empty) sequence of plane trees $T_{1},
\dotsc , T_{k}$ attached in order to $v$, with the root of $T_{i}$
becoming the $i$th child of $v$.  If $P$ is the ordinary generating
function of plane trees, this description leads to the well-known
functional relation
\[
P = x (1+P+P^{2}+P^{3}+\dotsb ),
\]
where the right hand side encodes the process of attaching an
arbitrary sequence of plane trees to an initial root vertex. 

Now consider the collection $\sP_{A}$ of plane trees colored according
to $A$.  In terms of the recursive specification above, one can think
of building a tree in $\sP_{A}$ by (i) picking a root vertex $v$ and
deciding on the number $k$ of its children, (ii) coloring $v$ one of
$a_{k}$ different colors, and (iii) continuing the process recursively
to each of the $k$ children of $v$.  This leads to the functional
relation 
\[
P_{A} = x (a_{0}+a_{1}P_{A}+a_{2}P_{A}^{2}+a_{3}P_{A}^{3}+\dotsb ),
\]
which proves \eqref{eq:Aeqn}.

To see \eqref{eq:Beqn}, we note that a tree in $\sP_{A,B}$ is built
similarly.  We (i) choose a root vertex $v$ and
decide on the number $k$ of its children, (ii) color $v$ one of
$b_{k}$ different colors, and (iii) place trees from $\sP_{A}$ under
$v$ with their roots as $v$'s children.  This leads to
\[
P_{A,B} = x (b_{0}+b_{1}P_{A}+b_{2}P_{A}^{2}+b_{3}P_{A}^{3}+\dotsb ),
\]
which proves \eqref{eq:Beqn}.
\end{proof}

\begin{defn}\label{def:abcoloring}
If $S$ is a plane tree appearing in $\sP_{A,B}$, then we say $S$ has
been equipped with an \emph{$(A,B)$-coloring}.
\end{defn}

\subsection{}
Now we define the generating functions for the colorings we will need.
For any pair $g\geq 0, r\geq 1$, let $\lambda (r,g)$ be the dimension of the space
of degree $g$ homogeneous polynomials in $r$ variables.  We have
\[
\lambda (r,g) = \binom{r-1+g}{r-1}.
\]
For any $k\geq 0$, let $W_{m} (k)$ be the number of partitions of a
set of size $k$ into blocks of size $m$.  Then $W_{m} (k) = 0$
unless $m$ divides $k$, and in this case we have
\[
W_{m} (k) = \frac{k!}{(m!)^{k/m} (k/m)!}.
\]

\begin{thm}\label{thm:gf}
Put 
\[
\ell_{m} (x) = \sum_{d\geq 0} W_{m} (dm) \lambda (m,dm)x^{d}
\]
and 
\[
h_{m} (x) = \sum_{d\geq 0} W_{m} (dm) x^{d}.
\]
Let $f_{m} (x) \in \ZZ [\![x]\!]$ satisfy the functional equation 
\begin{equation}\label{eq:fm}
f_{m} (x) = x \ell_{m} (f_{m} (x)).
\end{equation}
Then the generating function $F_{m} (x) = \sum_{n\geq 0}
C^{(m)}_{n}x^{n+1}$ satisfies 
\begin{equation}\label{eq:Fm}
F_{m} (x) = x h_{m} (f_{m} (x)).
\end{equation}
\end{thm}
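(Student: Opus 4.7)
The plan is to apply the plane-tree combinatorial interpretation in Theorem~\ref{thm:interpretations}(i) and reduce counting admissibly $m$-labeled plane trees on $nm+1$ vertices to counting colored plane trees on $n+1$ vertices, in a form to which Proposition~\ref{prop:coloringtrees} directly applies with $A = \ell_m$ and $B = h_m$.

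First I would construct a bijection from $\sA_{nm+1}$ (the set of pairs $(S,l)$ of a plane tree on $nm+1$ vertices with an admissible $m$-labeling modulo permutation, cf.~\eqref{eq:defofA}) to the set of $(A,B)$-colored plane trees on $n+1$ vertices in the sense of Definition~\ref{def:abcoloring}. Given $(S,l)$, contract each $m$-block of identically labeled vertices to a single super-vertex. The root, being the sole level-$0$ element, is already a singleton super-vertex, so the resulting rooted super-tree $\bar S$ has $n+1$ vertices. To obtain a canonical plane tree structure on $\bar S$, I would order the super-children of each super-vertex by the order in which their elements are first encountered in the preorder traversal of $S$. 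The color at each super-vertex will then encode the internal data needed to recover $(S,l)$.

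The heart of the proof is enumerating these internal data. At a non-root super-vertex $\bar B = \{u_1,\dotsc ,u_m\}$ with $d$ super-children, one must specify (a) the degree sequence $(d_1,\dotsc ,d_m)$ of the parents $u_i$ in $S$, a weak composition of $dm$ into $m$ parts, contributing $\lambda(m,dm)$ choices; and (b) the distribution of the $dm$ total children, canonically listed as $u_1$'s children, then $u_2$'s, and so on, into $d$ unordered label-blocks of size $m$, contributing $W_m(dm)$ choices (the identification of unordered partitions with admissible labelings modulo permutation uses the convention that labels are assigned in order of first appearance). The product is $W_m(dm)\lambda(m,dm) = [x^d]\ell_m(x)$. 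At the root super-vertex with $k$ super-children, the root is a single vertex with $km$ linearly ordered children, so there is no composition factor, and partitioning these $km$ children into $k$ blocks of size $m$ contributes $W_m(km) = [x^k]h_m(x)$ colorings. Verifying that the bijection is well-defined and invertible---in particular, that these local choices can be freely and independently assigned at each super-vertex and that their combination reconstructs a unique element of $\sA_{nm+1}$---is the main obstacle, and amounts to checking that the preorder-based canonical ordering is compatible with the recursive plane-tree structure and that no global obstructions arise from the admissibility condition beyond what is imposed locally by the partition-plus-composition data.

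Having set up this bijection, Proposition~\ref{prop:coloringtrees} immediately yields the desired functional equations. Since a non-root super-vertex with $d$ children receives one of $[x^d]\ell_m$ colors and the root super-vertex with $k$ children receives one of $[x^k]h_m$ colors, the super-trees match the set $\sP_{A,B}$ of Proposition~\ref{prop:coloringtrees}(ii) with $A = \ell_m$ and $B = h_m$. That proposition then gives $f_m(x) = x\ell_m(f_m(x))$ for $P_A$ and $F_m(x) = x h_m(f_m(x))$ for $P_{A,B}$, completing the proof.
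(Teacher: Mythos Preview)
Your proposal is correct and follows essentially the same route as the paper. Both arguments contract label-blocks to pass from $\sA_{nm+1}$ to plane trees on $n+1$ vertices, identify the fiber over each contracted tree with an $(\ell_m,h_m)$-coloring, and then invoke Proposition~\ref{prop:coloringtrees}; your composition factor $\lambda(m,dm)$ at a non-root super-vertex is exactly the paper's count of order-preserving maps $\sets{dm}\to\sets{m}$, and your block-partition factor $W_m(dm)$ matches the paper's set-partition count. The only cosmetic difference is that the paper pins down the plane structure on $\bar S$ via an explicit canonical relabeling (four monotonicity conditions on $l$), whereas you use first appearance in the preorder traversal of $S$; these give the same ordering on super-children, so the two presentations are equivalent.
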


\begin{proof}
Let us fix $m\geq 1$ and simplify notation by writing $F$, $f$,
$\ell$, $h$ instead of $F_{m}$, $f_{m}$, $\ell_{m}$, $h_{m}$.  Then by
Proposition \ref{prop:coloringtrees} $F$ is the ordinary generating
function of the $(\ell ,h)$-colored plane trees $\sP_{\ell ,h}$.
Recall \eqref{eq:defofA} that $\sA$ is the set of pairs $(T,l)$ where
$T\in \sP_{nm+1}$ and $l$ is an admissible $m$-labeling of $T$.  We
will construct a surjective map
\begin{equation}\label{eq:rho}
\rho \colon \sA \longrightarrow \sP  
\end{equation}
show that it induces a bijection between trees $(\ell , h)$-colored
plane trees with $n+1$ vertices and admissibly $m$-labeled plane trees
$\sA_{nm+1}$ on $nm+1$ vertices.  First we assume that the 
labeling set $L$ is the positive integers $\ZZ_{\geq 0}$, with its
standard total order.  Then we assume that $(S,l)\in \sA$ is labeled so
that the following properties hold:
\begin{enumerate}
\item \label{it:prop1} If $S$ has $nm+1$ vertices, then $l$ is surjective onto $\sets{n}\subset L$.
\item If $x,y\in S$ are two vertices and $y$ is in a higher level than
$x$ (i.e.~$y$ is further from the root than $x$), then $l (x)<l (y)$.
\item If $x,x'\in S$ are on the same level with $l (x)<l (x')$, and
$y$ (respectively, $y'$) is a child of $x$ (resp., $x'$), then $l (y)<l (y')$.
\item \label{it:prop4} If $L'\subset L$ is the subset of labels appearing in a fixed
level of $S$, then the elements of $L'$ are ordered compatibly with
when they first appear when read from left to right: the leftmost
vertex receives the smallest label from $L'$, the first new label seen
when reading to the right is next largest available label from $L'$,
and so on until the final label in $L'$ is seen.
\end{enumerate}
It is clear that, given any admissibly $m$-labeled tree, one can
permute the labels to satisfy these properties, and that the resulting
labeling is unique.

Recall that in the proof of Theorem \ref{thm:interpretations}, we
built a topological tree $\bar S$ on $n+1$ vertices from $S$ by
identifying vertices with the same labels and replacing parallel edges
with single edges.  After fixing a total order on $L$ and requiring
that the labels of $S$ satisfy the above, we obtain a canonical plane
tree structure on $\bar S$.  The root of $\bar S$ is the image of the
root of $S$, and each non root vertex of $\bar S$ receives a unique
label from $1$ to $n$.  The children of any vertex are ordered using
the order in $\sets{n}\subset \ZZ_{>0}$.  In the resulting plane tree,
which we also denote by $\bar S$, the labels in a given level increase
when read from left to right, and labels in higher levels are larger
than those in lower levels.  In particular, the labeling of $\bar S$
is uniquely determined; for this discussion, we will say that the
plane tree $\bar S$ has been canonically labeled.  Figure
\ref{fig:rho} shows an example of an admissibly $2$-labeled plane tree
$S$ satisfying the conditions \eqref{it:prop1}--\eqref{it:prop4}, and
the resulting canonically labeled plane tree $\bar S$.

We have thus constructed the map $\rho$ in \eqref{eq:rho}.  We claim
$\rho$ is surjective.  We let $\bar S$ be a canonically labeled plane tree,
and we build an admissibly $m$-labeled tree $(S,l)\in \rho^{-1} (\bar
S)$ as follows.  Suppose $\bar v \in \bar S$ is the root with children
$x_{1},\dotsc ,x_{d}$, read from left to right.  Under the root $v \in
S$ we place $dm$ vertices
\begin{equation}\label{eq:newverts}
x_{1}^{(1)},\dotsc ,x_{1}^{(m)},\dotsc ,x_{d}^{(1)},\dotsc ,x_{d}^{(m)},
\end{equation}
from left to right with $l (x_{i}^{(j)}) = l (x_{i})$, and put $\rho
(x_{i}^{(j)}) = \rho (x_{i})$.  Now suppose $\bar v\in \bar S$ is a
non root vertex with $d$ children.  By induction on distance to the
root, we may assume that $\bar v$ has already been lifted to $m$
vertices $v^{(1)},\dotsc ,v^{(m)}$ with $l (v^{(i)})= l (\bar v)$, and
these vertices have been placed in their level in $S$ in some order.
We lift the children $x_{1},\dotsc ,x_{d}$ of $\bar v$ to $dm$
vertices as before, again with $l (x_{i}^{(j)}) = l (x_{i})$ and $\rho
(x_{i}^{(j)}) = \rho (x_{i})$, and we make them all children of
$v_{1}^{(1)}$ in the order \eqref{eq:newverts}.  Continuing in this
way we obtain an admissibly $m$-labeled $(S,l)\in \rho^{-1} (\bar S)$
with $l$ satisfying \eqref{it:prop1}--\eqref{it:prop4}, which shows
$\rho$ is surjective.

To complete the proof, we claim that the inverse image $\rho^{-1}
(\bar S)$ is in bijection with the $(\ell ,h)$-colorings of $\bar S$.
To see this we revisit the proof of surjectivity and see what
additional choices one could make along the way to build $(S,l)$.
First consider the root vertex $\bar v \in \bar S$ and suppose it has
$d$ children $x_{1},\dotsc ,x_{d}$.  As before we lift these children
to $dm$ vertices $x^{(1)}_{1},\dotsc ,x_{d}^{(m)}$ under the root
$v\in S$, but we are not obligated to order them as in
\eqref{eq:newverts}. We can use any order compatible with the induced
ordering of the labels of the $x_{1},\dotsc ,x_{d}$ and with the
requirements \eqref{it:prop1}--\eqref{it:prop4}.  In particular this
implies that these orders are in bijection with set partitions of
$\sets{dm}$ into $d$ blocks $B_{1},\dotsc ,B_{d}$ of size $m$: if the
vertices $x_{i}^{(j)}$ are ordered under $v$ as $ y_{1},\dotsc ,y_{dm}
$, then
\[
l (y_{i}) = k \quad \text{if and only if} \quad i \in B_{k}.
\]
Since there are $W_{m} (dm)$ such set partitions, we see that the root
must be colored according to $h$.

Now consider a non root vertex $\bar v$ with $d$ children
$x_{1},\dotsc ,x_{d}$.  Again by induction we know that the vertices
$v^{(1)},\dotsc ,v^{(m)}$ mapping to $\bar v$ have been placed in
order in their level in $S$.  To place the $dm$ vertices $x_{1}^{(1)},
\dotsc , x_{d}^{(m)}$ in their level we must do two things:
\begin{enumerate}
\item [(A)] We must choose an order of the $x_{i}^{(j)}$ in their level, and
this order must be compatible with $l (x_{i}^{(j)}) = l (x_{i})$ and
the requirements \eqref{it:prop1}--\eqref{it:prop4}.
\item [(B)] After fixing the order of the $x_{i}^{(j)}$, we must decide how
to place them under their (potential) parents $v^{(1)}, \dotsc , v^{(m)}$.
\end{enumerate}
As before (A) corresponds to a set partition of $\sets{dm}$ into $d$
blocks of size $m$.  The data in (B) corresponds to an order
preserving map from $\sets{dm}$ to $\sets{m}$: if $x\in \{x_{i}^{(j)}
\}$ is a child of $v \in \{v^{(j)} \}$, then any $x'>x$ (according to
the order chosen in (A)) cannot be a child of any $v'<v$.  Such maps
are counted by $\lambda (m,dm)$.  Indeed, a degree $dm$ monomial
$z_{1}^{e_{1}}\dotsb z_{m}^{e_{m}}$ encodes that the first $e_{1}$ in
$\sets{dm}$ map to $1\in \sets{m}$, the next $e_{2}$ map to $2$, and
so on.  This means the total number of choices is $W_{m} (dm) \cdot
\lambda (m,dm)$, which implies that any non root vertex with $d$
children must be colored according to $\ell$.

We have thus shown that any $(S,l)$ in the inverse image $\rho^{-1}
(\bar S)$ corresponds uniquely to an $(\ell ,h)$-coloring of $\bar S$.  By
Proposition \ref{prop:coloringtrees}, this completes the proof.
\end{proof}

\begin{figure}[htb]
\psfrag{1}{$\scriptstyle 1$}
\psfrag{2}{$\scriptstyle 2$}
\psfrag{3}{$\scriptstyle 3$}
\psfrag{4}{$\scriptstyle 4$}
\psfrag{5}{$\scriptstyle 5$}
\psfrag{6}{$\scriptstyle 6$}
\psfrag{7}{$\scriptstyle 7$}
\psfrag{8}{$\scriptstyle 8$}
\psfrag{ar}{$\longrightarrow$}
\psfrag{S}{$S$}
\psfrag{bS}{$\bar S$}
\begin{center}
\includegraphics[scale=0.25]{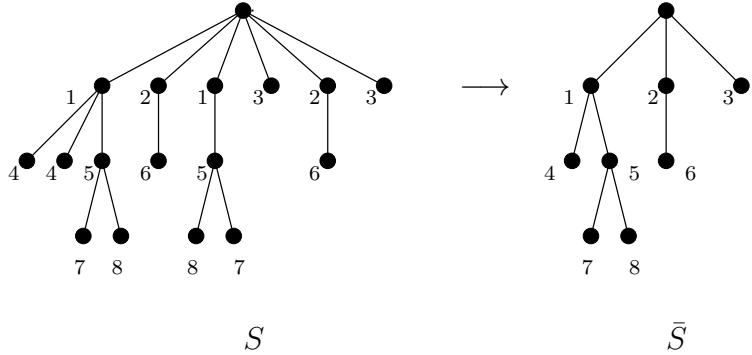}
\end{center}
\caption{The map $\rho$ takes the plane tree $S$ on $nm+1$ vertices to
a canonically labeled plane tree $\bar S$ on $n+1$
vertices.\label{fig:rho}}
\end{figure}

\begin{remark}
At this point the reader may be nostalgic for the recurrence
relation satisfied by the classical Catalan numbers, which corresponds
to the equation 
\begin{equation}\label{eq:rel}
x F_{1}^{2} - F_{1} + 1 = 0.
\end{equation}
For $m>1$, the power series $F_{m} (x)$ does not appear to be
algebraic, so unfortunately one does not have such a simple recurrence
relation on its coefficients.  However, experimentally one finds that
there is an algebraic relation satisfied by $f_{m}$ and $F_{m}$:
\begin{equation}\label{eq:rel2} 
f_{m}^{2} - xF_{m} +x = 0.
\end{equation}
Relation \eqref{eq:rel2} is easily checked when $m=1$.  We
have $f_{1} = x F_{1}$, so \eqref{eq:rel2} is really the same as
\eqref{eq:rel}.  For $m>1$ a proof of \eqref{eq:rel2} was given by
Mark Wilson \cite{mw}.  One observes that $\ell_{m}$ and $h_{m}$ are
related by 
\[
x \ell_{m} (x) =  h_m(x) - 1,
\]
from which \eqref{eq:rel2} easily follows.

The relation \eqref{eq:rel2} gives a connection between pairs of
objects computing hypergraph Catalan numbers in the spirit of that
encoded by \eqref{eq:rel}.  Consider $m=2$.  We have 
\[
f_{2} (x) = x + 3x^2 + 24x^3 + 267x^4 + \dotsb ,
\]
and 
\[
f_{2} (x)^{2} = x^2 +
6x^3 + 57x^4 + 678x^5 + \dotsb = x(F_{2} (x) - 1).
\]
We see $C_{3}^{(2)} = 57$ as the coefficient of $x^{4}$ in $f_{2}
(x)^{2}$, which comes from the coefficients of $x, x^{2}, x^{3}$ in
$f_{2} (x)$
via 
\begin{equation}\label{eq:rel3}
57 = 24\cdot 1 + 3\cdot 3 + 1\cdot 24.
\end{equation}
This is certainly reminiscent of the classical Catalan relation,
although there is an important difference.  The numbers involved on
the right of \eqref{eq:rel3} are connected with $C^{(2)}_{2},
C^{(2)}_{3}, C^{(4)}_{2}$, but they enumerate \emph{proper subsets} of
the associated objects, not the full sets.  Indeed, this is obviously
true, since both sides of \eqref{eq:rel3} involve the same number
$C^{(2)}_{3}$!  We have not attempted to explore this connection
further.
\end{remark}

\subsection{} We conclude this section by discussing asymptotics for
the $C^{(m)}_{n}$.  The results here are purely experimental; none
have been proved, although based on our numerical experiments we are
confident in them.  We learned this technique
from Don Zagier, who calls it \emph{multiplying by $n^{8}$}; an
excellent lecture by him at ICTP demonstrating the method can be
found online \cite{dz}.

Suppose one has a sequence $a = \{a_{n} \}_{n\geq 0}$ that
one believes satisfies an asymptotic of the form 
\begin{equation}\label{eq:an}
a_{n} \sim C_{0} + C_{1}/n + C_{2}/n^{2} + \dotsb \quad (n\rightarrow \infty).
\end{equation}
It may be difficult to extract $C_{0}$ to high precision even when $n$
is large, since $C_{1}/n$ might still be non-negligible.  However it
is possible to wash away the contributions of the nonconstant terms
$C_{k}/n^{k}$, $k\geq 1$.  One multiplies both sides of \eqref{eq:an}
by $n^{8}$ (or any other reasonable even power of $n$), and then
applies the difference operator $(8!)^{-1}\Delta^{8}$ to both sides,
where $\Delta a$ is the sequence $(\Delta{a})_{n} := a_{n+1}-a_{n}$.
The operator $\Delta^{k}$ annihilates any polynomial $p (n)$ of degree
$<k$, takes $n^{k}$ to $k!$, and takes $n^{-l}$ to a rational function
in $n$ of degree $-l-k$.\footnote{Here the 
degree of a rational function $P/Q$ in one variable means $\deg P - \deg Q$.}  Let $b
= \{b_{n} \}$ be the sequence resulting from multiplying $a_{n}$ by
$n^{8}/8!$ and applying the difference operator $\Delta$ eight times.
Then we have
\begin{equation}\label{eq:an2}
b_{n} \sim C_{0} + C_{9}p_{-9} (n) +
C_{10} p_{-10} (n) + \dotsb ,
\end{equation}
where $p_{-k} (n)$ denotes a rational function in $n$ of 
degree $-k$.  If one then evaluates the left of \eqref{eq:an2} at a
large value of $n$, the effects of $C_{k}$, $k\geq 9$ are negligible
on the right and one clearly sees $C_{0}$.  One can then repeat the
process with the sequence $n(a_{n}-C_{0})$ to find $C_{1}$, and so on.

\subsection{}
We illustrate with the series $F_{2} (x) = 1 + x + 6x^2 + 57x^3 +
678x^4 + \dotsb$.  Playing with the data one makes the ansatz 
\begin{equation}\label{eq:ansatz}
C^{(2)}_{n} \sim K A^{n}\,n! \,n^{\rho}
\end{equation}
for some constants $K, A, \rho$.  Indeed, apart from the $n!$, the
right of \eqref{eq:ansatz} is typical for these kinds of problems, and
was our initial guess; it quickly became evident that $n!$ needed to
be included.  The series 
\[
a_{n} := C^{(2)}_{n}/n!
\]
should then appear to grow exponentially, and the sequence of ratios $
\{a_{n}/a_{n-1}\}$ should satisfy \eqref{eq:an} with $C_{0} = A$.
Indeed, using 100 terms of $F_{2}$ and $\Delta^{16}$ we get
\[
A \approx
2.0000000000068961809\dotsb .
\]
Next we consider the sequence 
\[
a_{n} := C^{(2)}_{n}/(2^{n}\, n!),
\]
which we expect to be asymptotic to $K n ^\rho$.  We can detect $\rho$
using the sequence $b_{n} = (\Delta \log a)_{n}/ (\log (n+1) - \log
n)$, which satisfies \eqref{eq:an} with $C_{0} = -\rho$.  Again with
100 terms and $\Delta^{16}$ we get 
\[
-\rho \approx
0.499999997726715\dotsb .
\]
Hence we have
\[
C_{n}^{(2)} \sim K\cdot \frac{2^{n}\,n!}{\sqrt{n}}
\]
and we must find the constant $K$.  We consider $b_{n} =
C^{(2)}_{n}\sqrt{n}/2^{n}n!$ and look for $C_{0}$.  This time finding $K$ is
more difficult.  Taking 200 terms and applying $\Delta^{16}$,
this number appears to be $5.05704458036912766\dotsb $.  We use the
Inverse Symbol Calculator \cite{isc}, which attempts to symbolically
reconstruct a given real number using various techniques, and find
\[
K \approx 2 e^{3/2}/\sqrt{\pi}.
\]
(The $e^{3/2}$ is surprising, but is apparently correct.  Using 600
terms of the sequence, we find that $K\sqrt{\pi }/2$ agrees with
$e^{3/2}$ with relative error $<10^{-76}$.)  The conclusion is 
\[
C_{n}^{(2)} \stackbin{?}{\sim} e^{3/2} \cdot
\frac{2^{n+1}\,n!}{\sqrt{\pi n}}, \quad (n\rightarrow \infty).
\]

We present asymptotics for the $C^{(m)}_{n}$ as a conjecture:

\begin{conj}\label{conj:asymp}
Let $m>1$.  Then as $n\rightarrow \infty$, we have 
\[
C_{n}^{(m)} \sim K_{m} \cdot \frac{\bigl(\frac{m^{m-1}}{(m-1)!}\bigr)^{n+1}\,(n!)^{m-1}}{(\pi n)^{(m-1)/2}},
\]
where the constant $K_{m}$ is defined by 
\[
K_{m} = 
\begin{cases}
e^{3/2}&\text{if $m=2$,}\\
2\binom{2}{2}\binom{4}{2}\dotsb \binom{m-1}{2}/m^{(2m-3)/2}
&\text{if $m\geq 3$ and is odd, and}\\
\sqrt{2}\binom{3}{2}\binom{5}{2}\dotsb \binom{m-1}{2}/m^{(2m-3)/2}&\text{if $m\geq 4$ and is even.}
\end{cases}
\]
\end{conj}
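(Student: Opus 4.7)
The plan is a saddle-point analysis of an explicit integral representation of $C_n^{(m)}$. Summing \eqref{eq:contrib} over $T\in\TT_{n+1}$ weighted by $1/|\Gamma(T)|$, converting to a sum over labeled trees (a factor of $(n+1)!$), and grouping by degree sequence via the Pr\"ufer multinomial count $\binom{n-1}{d_1-1,\dotsc,d_{n+1}-1}$ yields
\[
C_n^{(m)} = \frac{2m^{n+1}}{(n+1)(m!)^{2n}}\sum_{\substack{d_i\geq 1\\ \sum d_i=2n}}\binom{n-1}{d_1-1,\dotsc,d_{n+1}-1}\prod_{i=1}^{n+1}(md_i-1)!.
\]
Inserting $(md_i-1)!=\int_0^\infty t^{md_i-1}e^{-t}\,dt$ for each factor and resumming produces the convergent integral representation
\begin{equation}\label{eq:intrepplan}
C_n^{(m)} = \frac{2m^{n+1}}{(n+1)(n-1)!\,(m!)^{2n}}\int_{[0,\infty)^{n+1}}\prod_{i=1}^{n+1} t_i^{m-1}e^{-t_i}\Bigl(\sum_{i=1}^{n+1} t_i^m\Bigr)^{n-1}\prod_{i=1}^{n+1} dt_i.
\end{equation}

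A guiding observation is that for $m\geq 3$ the star tree $K_{1,n}$ alone already produces the conjectured asymptotic with exactly the correct constant $K_m$. Its contribution is the extremal summand $\frac{2m^{n+1}(mn-1)!((m-1)!)^n}{(n-1)!\,(m!)^{2n}}$, and a direct application of Stirling's formula to $(mn-1)!$ and $(n-1)!$ gives
\[
\mathrm{star}(n)\sim \frac{(m-1)!\,2^{(3-m)/2}}{m^{m-3/2}}\cdot\frac{(m^{m-1}/(m-1)!)^{n+1}\,(n!)^{m-1}}{(\pi n)^{(m-1)/2}},
\]
and one checks case-by-case that the leading coefficient $\frac{(m-1)!\,2^{(3-m)/2}}{m^{m-3/2}}$ is identically equal to $K_m$ as given by the conjecture, for both odd $m\geq 3$ and even $m\geq 4$. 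The proof for $m\geq 3$ should therefore reduce to showing that the remaining degree sequences contribute $o(\mathrm{star}(n))$. This should follow from strict convexity of $d\mapsto\log[(md-1)!/(d-1)!]$ combined with a large-deviations estimate on the multinomial sum, which concentrates the dominant term on the extremal configuration with one $d_i=n$ and the rest equal to $1$.

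The genuine difficulty is the case $m=2$, where $\mathrm{star}(n)$ has the correct shape but leading coefficient $1$ rather than $e^{3/2}$; the non-star trees contribute at the same order, and the factor $e^{3/2}$ must emerge from a resummation. Here I would apply Laplace's method directly to \eqref{eq:intrepplan}: the symmetric saddle $t_1=\dotsb=t_{n+1}=t^\star$ satisfying $\frac{m-1}{t^\star}-1+\frac{(n-1)m}{(n+1)t^\star}=0$ has $t^\star\to 2m-1$, and the Hessian decomposes into one ``breathing mode'' together with $n$ tangential modes whose Gaussian integrals should combine with subleading Stirling corrections to reproduce $K_m$. The main obstacle is the identification $K_2=e^{3/2}$: the most plausible mechanism is an Euler--Maclaurin-type resummation $\exp(\sum_{k\geq 1}c_k)=e^{3/2}$ for explicit coefficients $c_k$ encoding the discrete-to-continuum correction in the multinomial sum, or alternatively a Borel summation of the divergent series $\ell_m,h_m$ of Theorem \ref{thm:gf} whose critical-point value recovers $K_m$. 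Either route demands finer tools than standard singularity analysis, and the parity-dependent structure of $K_m$ for $m\geq 3$ must be recovered from the same framework that explains the $e^{3/2}$ anomaly at $m=2$.
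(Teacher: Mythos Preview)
The paper does not prove this statement: Conjecture~\ref{conj:asymp} is presented purely as a conjecture, supported only by the numerical experiments (the ``multiplying by $n^{8}$'' technique) described in the paragraphs preceding it, so there is no proof to compare against. Your integral representation and your star-tree observation are correct and already go beyond what the paper does; in particular, for $m\geq 3$ the single degree sequence $(n,1,\dotsc,1)$ indeed reproduces the conjectured leading term with constant $(m-1)!\,2^{(3-m)/2}/m^{m-3/2}$, and one checks this equals the parity-split $K_{m}$ in the statement. What is missing for $m\geq 3$ is a uniform bound showing that all remaining degree sequences together contribute $o(\mathrm{star}(n))$; your convexity/large-deviations remark gestures at this but does not supply it.

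For $m=2$ your proposed mechanism is headed in the wrong direction. The symmetric critical point $t_{1}=\dotsb=t_{n+1}\approx 2m-1$ is \emph{not} a local maximum of the log-integrand: the Hessian there has the form $aI+bJ$ with $a\sim (m-1)^{2}/(t^{\star})^{2}>0$ on the $n$-dimensional complement of $(1,\dotsc,1)$, so the integrand increases in $n$ independent tangential directions and Laplace's method at that point yields nothing. The mass of the degree-sequence sum is concentrated near the star corner, and the factor $e^{3/2}$ emerges combinatorially from near-stars. Concretely, the sequence with one entry $n-k$, exactly $k$ entries equal to $2$, and the rest equal to $1$ carries (after the $(n+1)\binom{n}{k}$ placements and the multinomial factor $(n-1)!/(n-k-1)!$) asymptotic ratio $(3/2)^{k}/k!$ to the star term; summing over $k\geq 0$ gives exactly $e^{3/2}$. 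For general $m$ the same computation gives ratio $\bigl(\tfrac{(2m-1)!}{(m-1)!\,m^{m}}\bigr)^{k}\big/\bigl(k!\,n^{(m-2)k}\bigr)$, which vanishes for $m\geq 3$ and equals $(3/2)^{k}/k!$ for $m=2$, unifying the two regimes. Sequences with any second entry $d_{i}\geq 3$ contribute $O(1/n)$ relative to the star. A rigorous proof still needs a tail estimate over all degree sequences, but the missing idea is this expansion around the star, not a symmetric saddle analysis or a Borel summation of $\ell_{m},h_{m}$.
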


\begin{remark}
We have tested Conjecture \ref{conj:asymp} numerically using 100 terms
of $F_{m} (x)$ for all $m\leq 30$.  We have not systematically tried to
find higher terms in the asymptotic expansion of $C_{n}^{(m)}$, as in
\eqref{eq:an}.
\end{remark}

\section{Connection with matrix models}\label{s:mm}

\subsection{} We finish by explaining how the numbers $C_{n}^{(m)}$
are related to hypergraphs and matrix models \cite{n_contribution}.
We first explain the connection between graphs, matrix models, and the
usual Catalan numbers.  For more information, we refer to
Harer--Zagier \cite{harer.zagier}, Etingof \cite[\S 4]{etingof},
Lando--Zvonkin \cite{lz}, and Eynard \cite{eynard}.

\subsection{}\label{ss:dmu2} Let $d\mu_{2} (x)$ be the measure on polynomial
functions on $\RR$ with moments
\[
\ip{x^{r}}_{2} := \int_{\RR} x^{r} d\mu_{2} (x) = W_{2} (r),
\]
where $W_{2} (r)$ is the number of pairings on a set of size $r$.
It is well-known that  $d\mu _{2} (x)$ is essentially the Gaussian
measure, up to normalization: we have 
\[
\ip{x^{r}}_{2} = (2\pi)^{-1/2} \int_{\RR} x^{r} e^{-x^{2}/2}\, dx,
\]
where $dx$ is the usual Lebesgue measure on $\RR$.

Let $g_{1}, g_{2}, \dotsc$ be a family of indeterminates, and let $S
(x)$ be the formal power series $\sum_{r\geq 1} g_{r}x^{r}/r!$.  We
can compute the expectation $\ip{\exp (S (tx))}_{2}$ as a formal power
series in $t$ with coefficients in the polynomial ring $\QQ [g_{1},
g_{2}, \dotsc ]$. We have
\begin{equation}\label{eq:ipSg}
\ip{\exp S (tx)}_{2} = 1 +A_{2}t^{2}/2 + A_{4}t^{4}/8
+A_{6}t^{6}/48 + \dotsb 
\end{equation}
where 
\begin{multline*}
A_{2} = g_{1}^{2}+g_{2}, \quad A_{4} = g_1^4 +
6g_{1}^{2}g_2 + 4g_{1}g_{3} + 3g_2^2 + g_4,\\
A_{6} = g_1^6 + 15g_{1}^{4}g_2 + 20g_1^3g_{3} + 
45g_{1}^{2}g_2^2 
+ 15g_{1}^{2}g_4 \\
+ 60g_{1}g_{2}g_{3} + 
6g_{1}g_5 + 15g_2^3 + 15g_{2}g_4 + 10g_3^2 + g_6.
\end{multline*}
The series \eqref{eq:ipSg} can be interpreted as a generating function
for graphs weighted by the inverse of the orders of their automorphism
groups (cf.~\cite[Theorem 3.3]{etingof}).  Let $\n = (n_{1},n_{2},\dotsc)$ be a vector of nonnegative
integers, with $n_{i}$ nonzero only for finitely many $i$.  Let $|\n |
= \sum n_{i}$. We say a graph $\gamma$ has \emph{profile $\n$} if it
has $n_{i}$ vertices of degree $i$.  Let $G (\n)$ be the set of all
graphs of profile $\n$, up to isomorphism (we allow loops and multiple
edges).  By an automorphism of a graph, we mean a self-map that
permutes edges and vertices.  In particular, automorphisms include
permuting multiedges between two vertices, and flipping loops at a
vertex (exchanging the two half-edges emanating from the vertex that
form the loop).  For any $\gamma \in G (\n)$,
let $\Gamma (\gamma)$ be its automorphism group.  Then we have
\[
\ip{\exp S (tx)}_{2} = \sum_{\n } t^{|\n |} \sum_{\gamma \in G (\n
)}\frac{\prod_{i} g_{i}^{n_{i}}}{|\Gamma (\gamma)|}.
\]
For example, consider the term $5g_{3}^{2}/24$ from $A_{6}$ in
\eqref{eq:ipSg}.  There are two graphs with this profile, shown in
Figure \ref{fig:2graphs}.  The left has $2\cdot 2\cdot 2$
automorphisms, and the right has $2\cdot 3!$, which gives $1/8+1/12 =
5/24$.

\begin{figure}[htb]
\begin{center}
\includegraphics[scale=0.3]{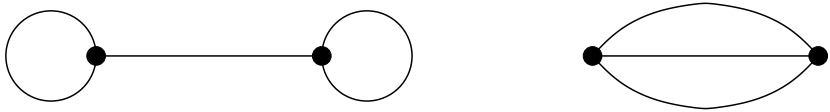}
\end{center}
\caption{The two graphs with profile $g_{3}^{2}$.\label{fig:2graphs}}
\end{figure}

\subsection{} Now we want to replace the Gaussian measure, which is
connected to counting pairings of a set, with something that is
connected to the numbers $W_{2m} (r)$, which count set partitions of
$\sets{r}$ with
blocks of size $2m$.  Let $d\mu_{2m} (x)$ be the
``measure'' on polynomial functions on $\RR$ that gives the monomial
$x^{r}$ the expectation $W_{2m} (r)$.  More precisely, we consider the
function taking $x^{r}$ to $W_{2m} (r)$ and extend linearly to
polynomials.  This is not a measure in the usual sense, although
formally we can regard it as such.  The ``expectation'' $\ip{\exp S
(tx)}_{2m}$ is then a well-defined power series in $t$, and has a
combinatorial interpretation via hyperbaggraphs.

Recall that a \emph{hypergraph} on a vertex set $V$ --- a notion due
to Berge \cite{berge} --- is a collection of subsets of $V$, called
the \emph{hyperedges}.  The degree of a vertex is the number of
hyperedges it belongs to, and a hypergraph is \emph{regular} if these
numbers are the same for all vertices.  The order of a hyperedge is
its number of vertices.  If all hyperedges have the same order, we say
that the hypergraph is \emph{uniform}.

Now suppose we allow $V$ to be a multiset, in other words a set with a
multiplicity map $V\rightarrow \ZZ_{\geq 1}$.  Then these
constructions lead to \emph{hyperbaggraphs}, due to Ouvrard--Le
Goff--Marchand{-}Maillet \cite{hbg}.(\footnote{In the CS literature,
multisets are sometimes called \emph{bags}.})  We extend the notions
of regularity and uniformity above by incorporating the multiplicity
in an obvious way (the order of a subset of a multiset is sum of the
multiplicities of its elements).

With these definitions, the expectation $\ip{\exp S (tx)}_{2m}$ now
enumerates uniform hyperbaggraphs of all profiles weighted by the
inverses of their automorphism groups, where each hyperedge has $2m$
elements.  For example,
\begin{equation}\label{eq:ipS}
\ip{\exp S (tx)}_{4} = 1 +B_{4}t^{4}/24
+B_{8}t^{8}/1152 + \dotsb 
\end{equation}
where 
\begin{equation*}
B_{4} = g_1^4 +
6g_{1}^{2}g_2 + 4g_{1}g_{3} + 3g_2^2 + g_4,\quad 
B_{8} = g_1^8 + 28g_1^6g_{2} + 56g_1^5g_{3} + \dotsb  + 35g_4^2 + g_8.
\end{equation*}
The computation of the contribution $35g_{4}^{2}/1152$ from $B_{8}$ in
\eqref{eq:ipS} is as follows.
There are three hyperbaggraphs of this profile, each with two
hyperedges.  The underlying set of vertices has $2$ elements $a$, $b$,
and we represent a hyperedge by a monomial in these variables.  The
profile $g_{4}^{2}$ means that each vertex has degree $4$, and since
$2m=4$ we must have uniformity $4$.  Thus we want pairs of monomials
in $a,b$ of total degree $4$.  This gives
\begin{equation}\label{eq:tup}
\{a^{4}, b^{4} \}, \quad \{a^{3}b, ab^{3} \}, \quad \{a^{2}b^{2}, a^{2}b^{2} \}.
\end{equation}
The orders of the automorphism groups are 
\begin{equation}\label{eq:aut}
2\cdot (4!)^{2}, \quad 2\cdot (3!)^{2}, \quad 2\cdot 2 \cdot (2!)^{2} (2!)^{2}.
\end{equation}
For example, the automorphisms of the last hyperbaggraph come from
interchanging the vertices, interchanging the two hyperedges, and the
internal flips within the hyperedges; the last type of automorphism
cannot occur for graphs.  Adding the inverses of these orders, one
finds $1/1152 + 1/72 + 1/64 = 35/1152$, which agrees with $B_{8}$
above.

As a final remark, we note that $A_{4} = B_{4}$.  This is a general
phenomenon: one can show that the coefficient of $t^{n}$ in $\ip{\exp
S (tx)}_{2m}$ is the \emph{complete
exponential Bell polynomial} $\mathbf{Y}_{n} (g_{1}, \dotsc , g_{n})$,
divided by $(2m)!^{d}\,d!$, where $d = n/2m$.
We refer to \cite[p.~134, eqn.~3b]{comtet} for the definition of
these; the coefficients of the $\mathbf{Y}_{n}$ can be found on OEIS
as sequence \texttt{A178867}.

\subsection{}
Now we pass to matrix models.  Let $V = V_{N}$ be the real vector space of $N\times
N$ complex Hermitian matrices.  The space $V$ has real dimension
$N^{2}$.  For
any polynomial function $f\colon V \rightarrow \RR $, define
\begin{equation}\label{eq:expectation}
\ip{f} = C^{-1}\int_{V} f (X) \exp (-\Tr X^{2}/2)\,dX,
\end{equation}
where $\Tr (X) = \sum_{i} X_{ii}$ is the sum of diagonal entries and
the constant $C$ is determined by the normalization $\ip{1} = 1$.  The
measure $\exp (-\Tr X^{2}/2)\,dX$ is essentially the product of the
Gaussian measures $d\mu_{2} (x)$ from \S\ref{ss:dmu2} taken over the real
coordinates of $V$.  The only difference is that for any off-diagonal
entry $Z_{ij} = X_{ij}+\sqrt{-1}Y_{ij}$, we have rescaled the measure
so that for even $r$ we have $\ip{X_{ij}^{r}}_{2} =
\ip{Y_{ij}^{r}}_{2} = W_{2} (r)/2^{r/2}$.

Now consider \eqref{eq:expectation}
evaluated on the polynomial given by taking the trace of the $r$th
power:
\begin{equation}\label{eq:basicint}
P(N,r) = \ip{\Tr X^{r}}.
\end{equation}
For $r$ odd \eqref{eq:basicint} vanishes for all $N$.  On the
other hand, for $r$ even and $N$ fixed, it turns out that $P
(N,r)$ is an integer, and as a function of $N$ is a polynomial of
degree $r/2+1$ with integral coefficients.

Furthermore, the number $P(N,r)$ has the following remarkable
combinatorial interpretation.  Let $\Pi_{r}$ be a polygon with $r$
sides.  Any pairing $\pi$ of the sides of $\Pi_{r}$ determines a
topological surface $\Sigma (\pi)$ endowed with an embedded graph (the
images of the edges and vertices of $\Pi_{r}$).  Let $v (\pi)$ be the
number of vertices in this embedded graph.  Then we have
\begin{equation}\label{eq:complexsurfsum}
P (N,r) = \sum_{\pi} N^{v (\pi)},
\end{equation}
where the sum is taken over all oriented pairings of the edges of
$\Pi_{r}$ such that the resulting topological surface
$\Sigma_{\pi}$ is orientable.  For example, we have
\begin{multline}\label{eq:polyexamples}
P(N,0) = N, P(N,2) = N^{2}, P(N,4) = 2N^{3}+N, \\
P(N,6) = 5 N^{4}+10N^{2}, \quad P( N,8) = 14N^{5}+70N^{3}
+ 21N.
\end{multline}
The pairings yielding $P(N,4)$ are shown in Figure \ref{fig:ccn4}.

\begin{figure}[htb]
\psfrag{n3}{$N^{3}$}
\psfrag{n}{$N$}
\begin{center}
\includegraphics[scale=0.4]{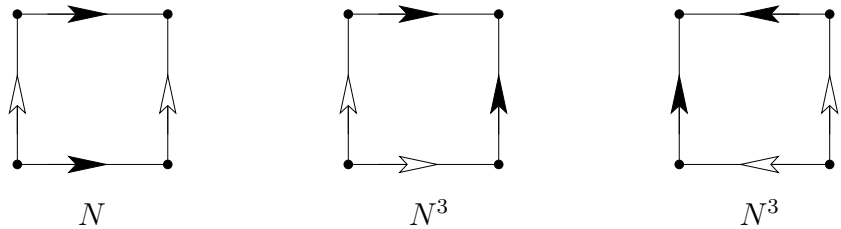}
\end{center}
\caption{Computing $P(N,4) = 2N^{3}+N$.}\label{fig:ccn4}
\end{figure}

\subsection{} One can see from \eqref{eq:polyexamples} that the
leading coefficient of $P (N,r)$ is none other than the Catalan number
$C_{r/2}$.  This follows easily from the interpretation of the Catalan
numbers in terms of polygon gluings ((vi) from \S\ref{ss:interp}).
Indeed, the leading coefficient counts the number of oriented pairings
of $\Pi_{r}$ such that the number of vertices $v (\pi)$ in the
orientable surface $\Sigma_{r}$ is \emph{maximal}; this is exactly the
interpretation above.

\subsection{} Now we modify the matrix model.  We replace the Gaussian
measure 
\[
\exp (-\Tr X^{2}/2)\,dX
\]
by the product of the formal measures $d\mu_{2m} (x)$ taken over the real
coordinates; again we rescale on the off-diagonal coordinates so that
for $r \equiv 0 \bmod 2m$ we have $\ip{X_{ij}^{r}}_{2m} =
\ip{Y_{ij}^{r}}_{2m} = W_{2m} (r)/2^{r/(2m)}$.  We write the
corresponding formal measure by $d\mu_{2m} (X)$.  Then we obtain a new
matrix model where polygons are glued by grouping their edges into
subsets of size $2m$ instead of pairs.  As above one can see that for
$r \equiv 0 \bmod 2m$, the integrals
\begin{equation}\label{eq:mtrace}
\int_{V} \Tr X^{r}\, d\mu_{2m}(X) 
\end{equation}
are polynomials $P_{2m} (N,r)$ in the dimension $N$.  For example, when $2m=4$ we have
\begin{multline*}
P_{4} (N,4) = N^{2}, \quad P_{4} (N,8) = 6N^{3}+21N^{2}+8N, \\
P_{4} (N,12) = 57N^{4}+715N^{3}+2991N^{2}+2012N.
\end{multline*}
The hypergraph Catalan numbers $C_{r}^{(m)}$ are the leading coefficients of these
polynomials:  we have
\[
P_{2m} (N,2mr) = C_{r}^{(m)} N^{r + 1} + \dotsb .
\]
A direct computation with the definition \eqref{eq:mtrace} shows that
these numbers are computed via Definition \ref{def:gencat}.  For more
details about these matrix models and the geometry of the polynomials $P_{2m}
(N,2mr)$, see \cite{n_contribution}.

\bibliographystyle{amsplain_initials_eprint_doi_url}
\bibliography{gencat}

\end{document}